\newtheorem{theorem}{Theorem}
\newtheorem{corollary}[theorem]{Corollary}
\newtheorem{proposition}[theorem]{Proposition}
\newtheorem{application}[theorem]{Application}
\newtheorem{lemma}[theorem]{Lemma}
\newtheorem{remark}[theorem]{Remark}
\newcommand{\C}{{\mathbb C}}
\newcommand{\N}{{\mathbb{N}}}
\newcommand{\R}{{\mathbb{R}}}
\newcommand{\s}{\,^*\!}
\newcommand{\st}{\,^\circ}
\newcommand{\sr}{\,^*{\mathbb R}}
\newcommand{\abs}[1]{\left\vert#1\right\vert}
\newcommand{\norm}[1]{\left\Vert#1\right\Vert}
\newcommand{\dis}[1]{\displaystyle#1}
\begin{document}

\baselineskip=18pt

\begin{center}
{\textbf{BIDUAL AS A WEAK NONSTANDARD HULL}\footnote
{{\em Mathematics Subject Classification}
 Primary:  46L05.  Secondary: 03H05 26E35 46S20.\\
 Key words: nonstandard hull, bidual, C*-algebra, von Neumann algebra, Sherman-Takeda Theorem.}}
\end{center}

\begin{center}

Siu-Ah Ng\footnote{
Address: School of Mathematical Sciences, University of KwaZulu-Natal,
Pietermaritzburg, 3209 South~Africa\\ website:
\texttt{http://www.maths.unp.ac.za/{\~{ }}siuahn/default.htm}
\quad \quad email: ngs@ukzn.ac.za }

\end{center}

\bigskip

\centerline{Abstract}

\begin{quote}
\small {\em We construct the weak nonstandard hull of a normed linear space $X$ from $\s X$ (the nonstandard extension of $X$) using the weak topology on $X.$ The bidual (i.e. the second dual) $X^{\prime\prime}$ is shown to be isometrically isomorphic to the weak nonstandard hull of $X.$ Examples and applications to C*-algebras are given, including a simple proof of the Sherman-Takeda Theorem. As a consequence, the weak nonstandard hull of a C*-algebra is always a von Neumann algebra. Moreover a natural representation of the Arens product is given. }
\end{quote}

\bigskip

\noindent Every normed linear space $X$ extends naturally to the bidual $X^{\prime\prime}.\,$ Less well-known among functional analysts is that $X$ also has a natural extension to a nonstandard version $\s X$ using methods from the Nonstandard Analysis. In a sense, any extension of $X$ with respect to some formal properties can always be identified from $\s X.$ The aim of this article is to relate $X^{\prime\prime}$ and $\s X$ and exploit the link between them. The main tool comes from a modification of W.A.J. Luxemburg's nonstandard hull construction (\cite{A}).

In \S 1 we give a very brief summary of the methodology and terminologies from nonstandard analysis. A generalization of the nonstandard hull construction is given. In \S 2 a representation of the bidual of a standard normed linear space $X$ as the weak nonstandard hull of $X$ is constructed from $\s X.$ In \S 3 applications of this representation to some sequence spaces are given. In \S 4 we apply our results to C*-algebras. Based on the weak nonstandard hull representation of the bidual, we produce a simple nonstandard proof of the Sherman-Takeda Theorem that the bidual of a C*-algebra forms a von Neumann algebra. In particular, this shows that the weak nonstandard hull of a C*-algebra is always a von Neumann algebra. Moreover a natural representation is provided for the Arens product(s) on the bidual.

\bigskip

\section{Preliminaries and the general nonstandard hull construction}

Background from nonstandard analysis is summarized as follows.

The nonstandard extension of a standard mathematical object $\,X\,$ is denoted by $\,\s X.\,$ (Note the various usages of the star symbol in this article.) The extensions are done simultaneously for all ordinary mathematical objects under consideration and with the preservation of all set theoretical properties among the extensions expressible in the first order logic in the language consisting of the membership symbol. This is referred to as the \emph{Transfer Principle}. In particular we have extensions such as $\s\N,\,$  $\sr\,$  and $\s\C$ which behave with respect to each other in the same formal manner as $\N,$ $\R$ and $\C.$ Since we regard $X\subset \s X,$ any element $a$ in $X$ is also written as $\s a,$ depending on the emphasis. An element from some $\,\s X\,$ is referred to as an \emph{internal} set. Since $X\in\mathcal{P} (X),$ the power set, each $\,\s X\,$ itself is internal; but there are internal sets not of this form. Non-internal sets are called \emph{external}. We identify a property with the set it defines, so we may speak of $\s P$ when $P$ is a standard mathematical property.

Elements in the set $\s\N$ are called \emph{hyperfinite}; a set counted internally by a hyperfinite number is also called hyperfinite (this is the same as $\s$finite); given $r, s\in\s\R,\,$ if $\abs{r-s}<1/n$ for all $n\in\N,$ we write $r\approx s$ (\emph{infinitely close}); $r$ is called \emph{infinitesimal} when $r\approx 0;$ a finite element $r$ of $\s\R$ (written $\abs{r}<\infty$) is one with $\abs{r}<n$ for some $n\in\N$; such $r$ is $\approx s$ for a unique $s\in\R$ (called the \emph{standard part}; in symbol: $s=\st r$). We use similar notions for elements in $\,\s\C.\,$

For some uncountable cardinal $\kappa$ sufficiently large for our purpose, we assume throughout that the so-called \emph{$\kappa$-Saturation Principle} is satisfied in our universe of nonstandard objects (which is possible under a weaker form of the Axiom of Choice), namely:
\begin{quote} If \emph{$\,\mathcal{F}$ is a family of no more than $\kappa$ internal sets such that $\displaystyle{\,\bigcap{\mathcal F}_0\neq\emptyset}$ for any finite subfamily $\mathcal{F}_0$ of $\,\mathcal{F}\,$ (i.e. $\mathcal{F}$ satisfies the finite intersection property),} then $\displaystyle{\,\bigcap{\mathcal F}\neq\emptyset}.$
\end{quote}

We refer the readers to \cite{A} for details of the construction of nonstandard universe and the methodology of nonstandard analysis.

Alternatively, material in this article can be formulated in the less intuitive and more complicated language of ultraproducts, namely one regards $\s X$ as some ultrapower $\prod_{\text{U}}X$ and internal subsets of $\s X$ as the some ultraproduct $\prod_{\text{U}}X_i$ for some $X_i\subset X$ and a strong enough but fixed ultrafilter $\text{U}.$

From a standard normed linear space, the nonstandard hull construction, due to Luxemburg (\cite{A}), produces a Banach space extension in the standard sense. Here we describe a generalization of this method.

Let $X$ be an internal linear space over $\s \C.$ Let $\rm W$ be a (possibly external) set of internal seminorms on $X.\,$ So for each $p\in {\rm W},\,$ $\,p:X\to\s [0,\infty)\,$ and
\[\forall x,y\in X\,\forall \alpha\in\s\C\; \big[p(x+y)\leq p(x)+p(y)\,\land\, p(\alpha x)=\abs{\alpha}p(x)\big].\]

Write $\text{Fin}(X)$ for $\{x\in X\,\vert\, \sup_{p\in {\rm W}} \st p(x)<\infty \},$ the finite part of $X$ w.r.t. $\rm W.$ On $\text{Fin}( X)$ an equivalence relation $\approx_{\rm{w}}$ is defined:  $x_1\approx_{\rm{w}} x_2$ iff $\,\forall p\in{\rm W}\, [p(x_1)\approx p(x_2)].$ Let $\mathfrak{I}:=\{x\in \text{Fin}(X)\,\vert\,x\approx_{\rm{w}} 0\}.$ Noticing that $\text{Fin}(X)$ is a linear space in the standard sense and $\mathfrak{I}$ is a subspace in the standard sense, i.e.  closed under addition and multiplication by $\alpha\in\C\,$ (in fact even by finite $\alpha\in\s\C$), we can form the quotient space $\text{Fin}(X)/\mathfrak{I},\,$ and denote it by $\widehat{X}^{\rm{w}}.\,$ Elements $x+\mathfrak{I}$  of $\widehat{X}^{\rm{w}}\,$ are denoted by $\widehat{x},\,$ where $x\in \text{Fin}(X).\,$ For $\widehat{x},\widehat{y}\in \widehat{X}^{\rm{w}}$ and $\alpha\in \C,\; \widehat{x}+\alpha \widehat{y}\,$ is defined as $\widehat{x+\beta y},$ for any $\beta\approx\alpha.$ Moreover,
\[\norm{\widehat{x}}_{\rm{w}}:=\sup\big\{\,\st p(x)\;\vert\; p\in{\rm W}\,\big\}.\]
It is straightforward to check that all these are well-defined.

\begin{proposition}
$\norm{\cdot }_{\rm{w}}$ forms a norm on $\widehat{X}^{\rm{w}}$ under which $\widehat{X}^{\rm{w}}$ is a standard Banach space.
\end{proposition}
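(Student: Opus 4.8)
The plan is to dispose of the norm axioms quickly --- granting the well-definedness of the quotient operations and of $\norm{\cdot}_{\rm w}$ already noted above --- and then to devote the work to completeness, which is the only substantial point. Homogeneity and subadditivity of $\norm{\cdot}_{\rm w}$ follow by applying $\st$ to the pointwise (in $p\in{\rm W}$) seminorm relations $p(\beta x)=|\beta|\,p(x)$ and $p(x+y)\le p(x)+p(y)$ and taking suprema over ${\rm W}$: membership of $x,y$ in $\text{Fin}(X)$ makes each $p(x),p(y)$ finite, so $\st$ is additive and $\R_{\ge 0}$-homogeneous on the quantities involved and $\st(|\beta|\,p(x))=|\alpha|\,\st p(x)$ whenever $\beta\approx\alpha\in\C$. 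For definiteness, $\norm{\widehat x}_{\rm w}=0$ means $\st p(x)=0$, i.e. $p(x)\approx 0$ (each $p(x)\ge 0$), for every $p\in{\rm W}$, which is exactly $x\approx_{\rm w}0$, i.e. $x\in\mathfrak I$, i.e. $\widehat x=0$.

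For completeness I would start with a Cauchy sequence $(\widehat{x_n})_n$ in $\widehat X^{\rm w}$, pass to a subsequence with $\norm{\widehat{x_{n+1}}-\widehat{x_n}}_{\rm w}<2^{-n}$, and fix internal representatives $x_n\in\text{Fin}(X)$; taking $\st$ and telescoping then gives $p(x_m-x_k)<2^{-k+1}$ for all $k<m$ in $\N$ and all $p\in{\rm W}$. The heart of the matter is a $\kappa$-saturation step. Since ${\rm W}$ may be external, I cannot form the set of candidate limits internally using all of ${\rm W}$ at once; instead, for $m\in\N$ and finite $F\subseteq{\rm W}$ set $A_{m,F}:=\{\,y\in X:\ p(y-x_n)\le 2^{-n+1}\ \text{for all }n\le m,\ p\in F\,\}$. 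Each $A_{m,F}$ is internal, being a finite conjunction of internal conditions on the internal parameters $x_1,\dots,x_m$, and it contains $x_m$, hence is nonempty; since $A_{\max_i m_i,\ \bigcup_i F_i}\subseteq\bigcap_i A_{m_i,F_i}$, the family $\{A_{m,F}\}$ has the finite intersection property, and it has at most $\kappa$ members once $\kappa$ is chosen large enough that $|{\rm W}|\le\kappa$, which the standing assumption permits. By $\kappa$-saturation pick $y\in\bigcap_{m,F}A_{m,F}$; then $p(y-x_n)\le 2^{-n+1}$ for every $n\in\N$ and every $p\in{\rm W}$.

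It remains to verify that $y$ works: from $p(y)\le p(y-x_1)+p(x_1)\le 1+p(x_1)$ and $x_1\in\text{Fin}(X)$ one gets $\sup_{p\in{\rm W}}\st p(y)\le 1+\norm{\widehat{x_1}}_{\rm w}<\infty$, so $y\in\text{Fin}(X)$ and $\widehat y\in\widehat X^{\rm w}$; and $p(y-x_n)\le 2^{-n+1}$ for all $p$ yields $\norm{\widehat y-\widehat{x_n}}_{\rm w}\le 2^{-n+1}\to 0$, so the subsequence converges to $\widehat y$ and therefore, being Cauchy, so does the original sequence. I expect the main obstacle to be precisely the externality of ${\rm W}$: in contrast to the classical single-norm nonstandard hull, where an overspill argument on one internal norm suffices, here one must approximate the external seminorm family by its finite subfamilies and reassemble via saturation --- which is exactly why the cardinality condition $\kappa\ge|{\rm W}|$ is needed.
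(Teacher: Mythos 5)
Your proof is correct, and its verification of the norm axioms and the final convergence argument are fine; the interesting comparison is in the completeness step, where you take a genuinely different (and slightly leaner) route than the paper. The paper first uses $\omega_1$-saturation to extend the standard sequence $\{x_n\}_{n\in\N}$ to an internal hyperfinite sequence $\{x_n\}_{n<N}$, and then applies $\kappa$-saturation to the family $\mathcal{F}_{p,m}=\{x_n \,\vert\, n<N \,\land\, p(x_n-x_{k_m})\leq 1/2m\}$, indexed by single seminorms $p\in{\rm W}$ and $m\in\N$; the limit representative is thus located \emph{inside} the internal extension of the sequence itself, and each $\mathcal{F}_{p,m}$ is internal because only one seminorm appears per set. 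You instead skip the hyperfinite-extension step entirely: after passing to a fast subsequence you apply saturation once, to the sets $A_{m,F}\subset X$ of candidate limits cut out by finitely many constraints (finitely many indices $n\leq m$ and a finite subfamily $F\subset{\rm W}$), handling the externality of ${\rm W}$ by finite-subset indexing rather than by one-seminorm-per-set indexing; nonemptiness is witnessed by $x_m$ itself, exactly as the paper's FIP is witnessed by tail terms. Both arguments need the same cardinality hypothesis (a family of size roughly $\abs{{\rm W}}+\omega\leq\kappa$), so neither is more economical in saturation strength; what your version buys is a single saturation application and a limit found directly in $X$, while the paper's version buys the mild extra information that the limit can be taken to be a term $x_n$ of the internally extended sequence with $n$ hyperfinite. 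One small stylistic caution: your telescoping bound $p(x_m-x_k)<2^{-k+1}$ must be read as a genuine internal inequality (which it is, since $\st p(x_{j+1}-x_j)<2^{-j}$ with $2^{-j}$ standard forces $p(x_{j+1}-x_j)<2^{-j}$), because the sets $A_{m,F}$ are defined by internal inequalities, not by conditions on standard parts; you implicitly use this when checking $x_m\in A_{m,F}$, and it is worth saying explicitly.
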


\begin{proof}
It is easy to see that $\widehat{X}^{\rm{w}}$ is a standard linear space  normed by $\norm{\cdot }_{\rm{w}}.$

Completeness follows from the $\kappa$-saturation with $\kappa$ chosen to be $\geq (\omega_1+\abs{{\rm W}})^+\,$ as follows.

Let $\{ \widehat{x}_n\,\vert\,n\in\N\}\,$ be a Cauchy sequence in $\widehat{X}^{\rm{w}}.$ For $m\in\N$ let $k_m\in\N\,$ so that
\[\forall n\in\N\, \big[ n>k_m \Rightarrow \norm{\widehat{x}_n-\widehat{x}_{k_m}}_{\rm{w}}<\frac{1}{2m}\,\big].\]
By $\omega_1$-saturation, we can extend $\{ x_n\,\vert\,n\in\N\}\,$ to some internal hyperfinite sequence $\{ x_n\,\vert\,n<N\}\,$ in $X,\,$ for some $N\in\s\N.\,$ Let $\mathcal{F}=\{\mathcal{F}_{p,m}\,\vert\, p\in{\rm W},\, m\in \N\,\},\,$ where $\mathcal{F}_{p,m}:= \{ x_n\,\vert\, n<N\;\land\; p({x_n-x_{k_m}})\leq 1/2m\,\}.$ Clearly, $\mathcal{F}$ is a family of internal sets having the finite intersection property. Therefore, by $\kappa$-saturation, we can find some $x\in\bigcap\mathcal{F}.\,$ From the definitions, we have
\[\forall n, m\in\N\;\big[ n>k_m\Rightarrow  \sup_{p\in {\rm W}} \st p(x-x_n)\leq\frac{1}{m}\,\big]\]
i.e. $\;x\in \text{Fin}(X)\,$ and $\displaystyle{\lim_{n\to\infty} \norm{\widehat{x}-\widehat{x}_n }_{\rm{w}} =0. }$

Hence $\widehat{X}^{\rm{w}}$ is a Banach space under $\norm{\cdot }_{\rm{w}}.$
\end{proof}

\bigskip

When ${\rm W}=\{\,\norm{\cdot } \},$ i.e. an internal norm $\norm{\cdot}$ on $X,$ this construction coincides with Luxemburg's nonstandard hull.

\bigskip

\section{The weak nonstandard hull and the bidual}

From now on $X$ always stands for a standard normed linear space over $\C.\,$ The dual space is denoted by $X^\prime$ and hence the bidual (second dual) by $X^{\prime\prime}.$ Each bounded linear form $\phi\in X^\prime$ defines an internal seminorm on $\s X:$
\[p_\phi (x) \,:=\,\abs{\s\phi (x)},\quad\text{where} \; x\in\s X. \]

We further restrict ourselves to the case ${\rm W}=\{\, p_\phi\,\vert\, \phi\in X^\prime,\,\norm{\phi}\leq 1\,\}\,$ and write $\widehat{X},\, \approx$  and $\,\norm{\cdot}\,$ instead of $\widehat{\s X}^{\rm{w}},\,\approx_{\rm w}\,$ and $\,\norm{\cdot}_{\rm{w}}.\,$

We call $\widehat{X}$ \emph{the weak nonstandard hull} of $X.$

Note that $\text{Fin}(\s X)$ includes $\{x\in\s X\,\vert\, \norm{x}<\infty \}.\,$ It is generally a proper subset. Moreover, on $\text{Fin}(\s X),$  $x_1\approx x_2$ iff $\,\forall \phi\in X^\prime\,\big[ \s\phi (x_1)\approx \s\phi (x_2)\big].$

Also $\mathfrak{I}=\{x\in \text{Fin}(\s X)\,\vert\,\s\phi (x)\approx 0,\,\phi\in X^\prime\}.$

We identify $X\subset \widehat{X}\,$ as a subspace via the isomorphic embedding $x\mapsto \widehat{x}.\,$

The following result identifies the bidual with the weak nonstandard hull of $X.$

\begin{theorem}\label{bidual}
The Banach space $\widehat{X}$ is isometrically isomorphic to $X^{\prime\prime}.$
\end{theorem}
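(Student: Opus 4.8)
The plan is to exhibit an explicit map and show it is an isometric isomorphism, the only nontrivial point being surjectivity. For $x\in\text{Fin}(\s X)$ I would set $F_x\colon X^\prime\to\C$, $F_x(\phi):=\st\big(\s\phi(x)\big)$. This makes sense: by definition of $\text{Fin}(\s X)$, $\s\phi(x)$ is finite when $\norm\phi\leq 1$, hence for all $\phi\in X^\prime$ after scaling. Since $\st$ is $\C$-linear on finite elements, $F_x$ is linear, and $\abs{F_x(\phi)}=\st\abs{\s\phi(x)}\leq\norm\phi\,\big(\sup_{\norm\psi\leq1}\st p_\psi(x)\big)=\norm\phi\,\norm{\widehat x}$, so $F_x\in X^{\prime\prime}$ with $\norm{F_x}\leq\norm{\widehat x}$; taking the supremum over $\norm\phi\leq1$ gives equality $\norm{F_x}=\norm{\widehat x}$. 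Thus $x\mapsto F_x$ is a norm-preserving linear map on $\text{Fin}(\s X)$ whose kernel is exactly $\mathfrak I=\{x:\s\phi(x)\approx0\ \forall\phi\in X^\prime\}$; it therefore factors through a well-defined linear isometry $\widehat\Phi\colon\widehat X\to X^{\prime\prime}$, $\widehat x\mapsto F_x$ (and one checks in passing that $\widehat\Phi$ carries $\widehat a$ to the canonical image of $a$, for $a\in X$).

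The substance of the proof is surjectivity of $\widehat\Phi$, and here is where $\kappa$-saturation enters. Given $F\in X^{\prime\prime}$, I want $x\in\text{Fin}(\s X)$ with $\s\phi(x)\approx F(\phi)$ for every $\phi\in X^\prime$. For each finite tuple $\phi_1,\dots,\phi_n\in X^\prime$ and each $m\in\N$ I consider the internal set
\[A_{\phi_1,\dots,\phi_n,m}:=\Big\{x\in\s X : \norm x\leq\norm F\ \land\ \abs{\s\phi_i(x)-F(\phi_i)}<\tfrac1m,\ 1\leq i\leq n\Big\},\]
the constants $\norm F,\ F(\phi_i),\ 1/m$ all being standard. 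This family has the finite intersection property once each set is nonempty, and nonemptiness is the classical ``Helly''-type lemma (a step in the usual proof of Goldstine's theorem): with $T\colon X\to\C^n$, $T(y):=(\phi_1(y),\dots,\phi_n(y))$, the point $(F(\phi_1),\dots,F(\phi_n))$ must lie in the closure of $T\big(\{y\in X:\norm y\leq\norm F\}\big)$, for otherwise finite-dimensional Hahn--Banach separation produces $a_1,\dots,a_n\in\C$ with $\re{\textstyle\sum_i\overline{a_i}F(\phi_i)}>\norm F\cdot\norm{\sum_i\overline{a_i}\phi_i}$, contradicting $\abs{F(\sum_i\overline{a_i}\phi_i)}\leq\norm F\cdot\norm{\sum_i\overline{a_i}\phi_i}$. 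Hence by $\kappa$-saturation (the blanket assumption on $\kappa$ covers $\kappa\geq\abs{X^\prime}$ here) the whole family meets in some $x$; this $x$ satisfies $\norm x\leq\norm F<\infty$, so $x\in\text{Fin}(\s X)$, and $\s\phi(x)\approx F(\phi)$ for all $\phi\in X^\prime$, i.e.\ $\widehat\Phi(\widehat x)=F$.

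Combining the two parts, $\widehat\Phi$ is a surjective linear isometry of $\widehat X$ onto $X^{\prime\prime}$, which is exactly the assertion. I expect the main obstacle to be the nonemptiness of the sets $A_{\phi_1,\dots,\phi_n,m}$ — everything else is the routine bookkeeping that $\st\circ\,\s\phi$ is linear, bounded, and kills precisely $\mathfrak I$. If one is content to cite a standard theorem, this obstacle dissolves: Goldstine's theorem says the unit ball of $X$ is weak$^*$-dense in that of $X^{\prime\prime}$, so every basic weak$^*$-neighbourhood of $F$ meets the ball of radius $\norm F$ in $X$, which is exactly what nonemptiness of such an $A$ asserts.
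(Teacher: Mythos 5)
Your proof is correct and takes essentially the same approach as the paper: the same map $\widehat{x}\mapsto\big(\phi\mapsto\st\big(\s\phi(x)\big)\big)$, with injectivity and the isometry being routine and surjectivity obtained from Helly's theorem together with saturation. The only difference is one of bookkeeping: the paper transfers Helly's theorem to a hyperfinite set $A$ with $X^\prime\subset A\subset\s\big(X^\prime\big)$ to get an exact preimage up to an infinitesimal norm slack, whereas you run a finite-intersection-property argument directly on the internal sets $A_{\phi_1,\dots,\phi_n,m}$, whose nonemptiness is the standard finite-dimensional Helly/Goldstine step — the same classical ingredient in both cases.
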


\begin{proof}
We first define $\pi : \widehat{X}\to X^{\prime\prime}.\,$ For $\widehat{x}\in \widehat{X}$ and $\phi\in X^\prime,$ we let
$\pi (\widehat{x})\big(\phi\big):= \st\big(\s\phi(x)\big).$ So $\pi (\widehat{x})$ is a well-defined bounded linear form on $X^\prime.$

Clearly $\pi$ is injective and a homomorphism. We now show that it is surjective and an isometry.

Under enough saturation, let $A$ be a hyperfinite set such that $\,X^\prime \subset A\subset \s\big( X^\prime \big).$ Here the inclusion $X^\prime\subset \s\big(X^\prime\big)$ is given by identifying each $\phi\in X^\prime$ with $\s\phi.$ Let $x^{\prime\prime}\in X^{\prime\prime}$ and $0<\epsilon\approx 0.$ By transferring Helley's Theorem (\cite{M}~1.9.12), there is $a\in \s X$ such that
\[\norm{a}\leq \norm {\s x^{\prime\prime}} +\epsilon\quad\text{and}\quad \forall\,\phi\in A\,\big[\phi(a) =\s x^{\prime\prime} (\phi)\big].\]
In particular, $a\in \text{Fin}(\s X)$ and $\forall\,\phi\in X^\prime \,\big[\s\phi(a) \approx \big(x^{\prime\prime}\big) (\phi)\big].$

Consequently, $\pi (\widehat{a}) = x^{\prime\prime}$ and $\norm{\widehat{a}}= \norm{x^{\prime\prime}}.$

Therefore $\pi$ is surjective and an isometry.
\end{proof}

\bigskip

We remark that only $\kappa$-saturation for an uncountable $\kappa$ greater than the algebraic dimension of $X^\prime$ is needed in the above proof.

From now on, $\widehat{X}$ is identified with $X^{\prime\prime}.$

\bigskip

\begin{corollary}\label{wc}
$X$ is a reflexive Banach space iff $X=\widehat{X}\,$ iff the closed unit ball of $X$ is weakly compact.
\end{corollary}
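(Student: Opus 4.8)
The plan is to chain the three equivalences through Theorem \ref{bidual}, using the identification $\widehat{X} = X^{\prime\prime}$ together with the canonical embedding $X \hookrightarrow X^{\prime\prime}$ (which, under our identification, is exactly $x \mapsto \widehat{x}$). First I would recall that reflexivity of $X$ means precisely that the canonical isometric embedding $\iota : X \to X^{\prime\prime}$ is surjective; since Theorem \ref{bidual} realizes $\pi^{-1}\circ\iota$ as the inclusion $X \subset \widehat{X}$, reflexivity is equivalent to $X = \widehat{X}$. This disposes of the first "iff" almost by unwinding definitions — the only point worth stating is that the embedding $x \mapsto \widehat{x}$ agrees, under $\pi$, with the standard canonical embedding $\iota$, which is immediate from the formula $\pi(\widehat{x})(\phi) = \st(\s\phi(x)) = \phi(x)$ for $x \in X$.

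Next I would handle the equivalence with weak compactness of the closed unit ball $B_X$. Here the natural route is the classical Kakutani/Banach–Alaoglu circle of ideas, but I would prefer to give it a nonstandard flavour consistent with the paper's machinery. One direction: if $X$ is reflexive, then $B_X = B_{X^{\prime\prime}}$ is weak-$*$ compact by Banach–Alaoglu, and since on a reflexive space the weak-$*$ topology of $X^{\prime\prime}$ restricts to the weak topology of $X$, we get $B_X$ weakly compact. For the converse — weak compactness of $B_X$ implies $X = \widehat{X}$ — I would argue directly from the construction: given $\widehat{a} \in \widehat{X}$ with $\norm{\widehat{a}} \le 1$, we have $a \in \s X$ with $\st p_\phi(a) \le 1$ for every $\phi \in B_{X'}$. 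By transfer and saturation one checks that $\st(\s\phi(a))$ is achieved, for each $\phi$, as $\phi(x)$ along the net of standard parts; weak compactness of $B_X$ lets us extract a genuine $x \in B_X$ with $\phi(x) = \st(\s\phi(a))$ for all $\phi \in B_{X'}$ simultaneously, whence $\widehat{a} = \widehat{x} \in X$. Homogeneity then upgrades this from the unit ball to all of $\widehat{X}$.

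The main obstacle, I expect, is that last extraction step: producing a single $x \in B_X$ that simultaneously matches the prescribed values $\st(\s\phi(a))$ for \emph{all} $\phi \in B_{X'}$. This is exactly where weak compactness is used and cannot be dispensed with — pointwise (in $\phi$) one always finds standard approximants from $B_X$ by definition of the standard part, but assembling them into one weak cluster point requires compactness. Concretely, I would consider, for each finite $F \subset B_{X'}$ and each $n \in \N$, the weakly closed set $C_{F,n} := \{x \in B_X : |\phi(x) - \st(\s\phi(a))| \le 1/n \text{ for all } \phi \in F\}$; these are nonempty (take $\st$ of $a$ tested against $F$) and have the finite intersection property, so weak compactness of $B_X$ yields a point in their total intersection, which is the desired $x$. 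An alternative, shorter write-up would simply quote the equivalence of reflexivity with weak compactness of the unit ball as the standard Kakutani theorem (\cite{M}) once $X = \widehat{X} \Leftrightarrow$ reflexivity has been established, and I would likely present it that way to keep the corollary brief, relegating the nonstandard extraction to a remark.
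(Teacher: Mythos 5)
Your first equivalence (reflexivity iff $X=\widehat{X}$, via $\pi(\widehat{x})(\phi)=\st(\s\phi(x))=\phi(x)$ for $x\in X$) is exactly the paper's argument, and your direction ``reflexive $\Rightarrow$ $B_X$ weakly compact'' via Alaoglu is fine. The gap is in your converse, at the step you yourself flag as the crux: the nonemptiness of the sets $C_{F,n}=\{x\in B_X:\ |\phi(x)-\st(\s\phi(a))|\le 1/n,\ \phi\in F\}$. The parenthetical ``take $\st$ of $a$ tested against $F$'' does not justify it. An element $a\in{\rm Fin}(\s X)$ is finite only with respect to the seminorms $p_\phi$; the bound $\norm{\widehat{a}}\le 1$ controls $\st\abs{\s\phi(a)}$ but not $\norm{a}$, which may be infinite, so $a$ need not lie in $\s B_X$ and there is no ``standard part of $a$'' to take. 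If instead you transfer down the internal statement ``$\exists y\ \bigwedge_{\phi\in F}\abs{\s\phi(y)-\st(\s\phi(a))}\le 1/n$'' (witnessed by $a$), you obtain a point of $X$ with no norm control, hence not necessarily in $B_X$ --- and without the constraint $x\in B_X$ the finite-intersection/compactness extraction collapses. What is really needed here is exactly the Goldstine/Helly content: since $\abs{\sum_{\phi\in F}\alpha_\phi\,\st(\s\phi(a))}\le\norm{\widehat{a}}\,\norm{\sum_{\phi\in F}\alpha_\phi\phi}$, Helly's Theorem (used, after transfer, in the proof of Theorem~\ref{bidual}) or Theorem~\ref{norm} (which produces a representative of $\widehat{a}$ with $\st\norm{\cdot}\le\norm{\widehat{a}}$, after which downward transfer does land in $(1+1/n)B_X$ and rescaling lands in $B_X$) supplies the nonempty $C_{F,n}$. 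With that patch your FIP argument in the weakly compact ball is correct and yields $\widehat{a}=\widehat{x}$ for some $x\in B_X$, and homogeneity finishes it.

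For comparison, the paper dispenses with all of this by invoking Robinson's nonstandard characterization of compactness for the weak topology: $X=\widehat{X}$ is read as ``every point of the unit ball of $\s X$ is weakly infinitely close to a standard point,'' which is weak compactness of the closed unit ball --- a two-line argument entirely inside the nonstandard framework. Your fallback of quoting Kakutani's theorem once the first equivalence is settled is also legitimate and brief, but it imports a standard theorem whose proof contains the same Goldstine-type step you were trying to argue directly.
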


\begin{proof}
By the above theorem and the canonical embedding of $X$ into $\widehat{X},\,$ reflexivity is equivalent to $X=\widehat{X},\,$ which by definition is equivalent to every point $a$ in the unit ball of $\s X$ is infinitely close to a standard point $c\in X\,$ in the weak topology, i.e. $\s \phi(a)\approx \phi (c)\,$ for all $\phi\in X^\prime.\,$ So by Robinson's characterization of compactness (\cite{A}), it is equivalent to the weak compactness of the close unit ball of $X.$
\end{proof}

\bigskip

A similar application leads to a proof of Goldstine's Theorem: The closed unit ball of $X$ is weak* dense in the closed unit ball of $X^{\prime\prime}.$

The following gives an alternative way of computing the norm in $\widehat{X}$ from the norm in $\s X.$

\bigskip

\begin{theorem}\label{norm}
Let $\widehat{a}\in \widehat{X}.$ Then $\dis \norm{\widehat{a}} = \inf\big\{ \st\norm{x}\,\big\vert\, x\in{\rm{Fin}}(\s X),\, x\approx a\,\big\}.$

In other words, $\dis \norm{\widehat{a}} = \inf_{\varepsilon\in\mathfrak{I}} \st\norm{a+\varepsilon}.$
\end{theorem}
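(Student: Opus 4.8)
The plan is to prove the two inequalities separately. For the direction $\norm{\widehat a}\leq \inf\{\st\norm{x}\mid x\in\mathrm{Fin}(\s X),\ x\approx a\}$, I would simply observe that whenever $x\approx a$ with $x\in\mathrm{Fin}(\s X)$, we have $\widehat x=\widehat a$, and for every $\phi\in X'$ with $\norm{\phi}\leq 1$, $p_\phi(x)=\abs{\s\phi(x)}\leq\norm{x}$, hence $\st p_\phi(x)\leq\st\norm{x}$. Taking the supremum over all such $\phi$ gives $\norm{\widehat a}=\norm{\widehat x}\leq\st\norm{x}$, and then the infimum over all such $x$. This direction is routine and uses only the definitions.

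For the reverse inequality $\norm{\widehat a}\geq\inf\{\st\norm{x}\mid x\approx a\}$, the idea is to produce, for each real $\varepsilon>0$, some $x\approx a$ with $\st\norm{x}\leq\norm{\widehat a}+\varepsilon$. Here I would invoke Theorem \ref{bidual} (with $\widehat X$ identified with $X''$): the element $\widehat a$ corresponds to $x''=\pi(\widehat a)\in X''$ with $\norm{x''}=\norm{\widehat a}$, where $x''(\phi)=\st(\s\phi(a))$ for $\phi\in X'$. Now run exactly the Helley-type argument from the proof of Theorem \ref{bidual}: choose a hyperfinite set $A$ with $X'\subset A\subset\s(X')$, pick an infinitesimal $\epsilon_0\approx 0$ with $\epsilon_0>0$, and by transferred Helley obtain $x\in\s X$ with $\norm{x}\leq\norm{\s x''}+\epsilon_0$ and $\phi(x)=\s x''(\phi)$ for all $\phi\in A$. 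Since $X'\subset A$, this forces $\s\phi(x)\approx x''(\phi)\approx\s\phi(a)$ for every $\phi\in X'$, i.e. $x\approx a$ and in particular $x\in\mathrm{Fin}(\s X)$; and $\st\norm{x}\leq\st(\norm{\s x''}+\epsilon_0)=\norm{x''}=\norm{\widehat a}$. Thus the infimum is attained and is $\leq\norm{\widehat a}$, completing the inequality. (In fact this shows the infimum is a minimum.)

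Finally I would note that the second displayed formula $\norm{\widehat a}=\inf_{\varepsilon\in\mathfrak I}\st\norm{a+\varepsilon}$ is just a restatement: the condition $x\approx a$ with $x\in\mathrm{Fin}(\s X)$ is, by the definition of $\approx$ and $\mathfrak I$, precisely the condition $x=a+\varepsilon$ for some $\varepsilon\in\mathfrak I$, since $a\in\mathrm{Fin}(\s X)$ already (as $\widehat a$ exists) and $\mathfrak I$ is closed under the relevant operations.

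The main obstacle is the reverse inequality, and specifically making sure the Helley-type selection simultaneously controls the norm and pins down the values $\s\phi(x)$ on all of $X'$ — but this is already carried out in the proof of Theorem \ref{bidual}, so here it amounts to reusing that construction and reading off the extra norm information it provides. Everything else is bookkeeping with the definitions of $\mathrm{Fin}(\s X)$, $\approx$, and $\mathfrak I$.
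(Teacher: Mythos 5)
Your proof is correct, but it follows a genuinely different route from the paper's. The easy inequality $\norm{\widehat a}\leq\inf_{\varepsilon\in\mathfrak I}\st\norm{a+\varepsilon}$ is the same in both. For the reverse inequality you go through Theorem~\ref{bidual}: you identify $\widehat a$ with $x''=\pi(\widehat a)$, use $\norm{x''}=\norm{\widehat a}$, and rerun the transferred Helley selection with an infinitesimal tolerance and a hyperfinite $A\supset X'$ to get $x\in\s X$ with $\s\phi(x)=\s x''(\phi)$ for every standard $\phi$ (hence $x\approx a$) and $\norm{x}\leq\norm{\s x''}+\epsilon_0$, so $\st\norm{x}\leq\norm{\widehat a}$; this is sound, and there is no circularity since Theorem~\ref{bidual} precedes this statement and its proof does not use it. The paper instead argues intrinsically in the hull: after normalizing $\norm{\widehat a}=1$ it proves a finitary Claim --- that the values $\st\s\phi(a)$ on any finite set of unit functionals can be matched, up to $1/n$, by a standard point of $(1+\frac1m)B_X$ --- by contradiction via the Hahn--Banach separation theorem, and then saturation plus transfer produce $c\approx a$ with $\st\norm{c}\leq 1$. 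The underlying functional-analytic content is essentially the same (Helley's theorem is itself a Hahn--Banach consequence), but your version is shorter because it reuses machinery already set up, and it yields a bonus: combined with the easy inequality it gives $\st\norm{x}=\norm{\widehat a}$, so the infimum is attained and Corollary~\ref{ca} follows at once, whereas the paper needs a separate saturation argument there. What the paper's self-contained argument buys is a proof of the norm formula directly from the definition of $\norm{\cdot}_{\rm w}$, independent of the explicit isometry $\pi$ and of Helley's theorem as a black box. Your final bookkeeping remark, that $\{x\in{\rm Fin}(\s X):x\approx a\}=\{a+\varepsilon:\varepsilon\in\mathfrak I\}$, is also correct since ${\rm Fin}(\s X)$ is a linear space containing $a$ and $\mathfrak I$ is its subspace of elements annihilated up to infinitesimals by all $\phi\in X'$.
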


\begin{proof}
For convenience, we write $\norm{\widehat{a}}_{\rm{v}} = \inf_{\varepsilon\in\mathfrak{I}} \st\norm{a+\varepsilon}.$

We let $B_X$ and $\bar{B}_{X^\prime}$ denote the open ball $\{x\in X\,\vert\, \norm{x}<1\}$ and the closed ball $\{\phi\in {X^\prime}\,\vert\, \norm{\phi}\leq 1\}$ respectively.

Let $\varepsilon\in\mathfrak{I},$ then
\[\norm{\widehat{a}}\,=\,\sup_{\phi\in \bar{B}_{X^\prime}}\st\abs{\s\phi(a)}\,=\,\sup_{\phi\in \bar{B}_{X^\prime}}\st\abs{\s\phi(a+\varepsilon )}\,\leq\,\st\norm{a+\varepsilon},\]
hence $\norm{\widehat{a}}\leq \norm{\widehat{a}}_{\rm{v}}.$

To show $\norm{\widehat{a}}_{\rm{v}}\leq \norm{\widehat{a}}$ we first assume without loss of generality that $\norm{\widehat{a}} = 1,$ and will show that $\norm{\widehat{a}}_{\rm{v}} \leq 1.$

\bigskip

\noindent \emph{Claim}: Let $A\subset \{\phi\in X^\prime\,\vert\,\norm{\phi}=1\,\}$ be finite and $n,m\in\N.$ Then there exists some $c\in (1+\frac{1}{m}) B_X$ such that $\forall \phi\in A\, \forall n\in N\;\st\abs{\s\phi(a)-\phi(c)}\leq \frac{1}{n}.$

We denote $\st(\s\phi(a))$ by $r_\phi.$ Suppose the claim fails. Then for some finite $A\subset \{\phi\in X^\prime\,\vert\,\norm{\phi}=1\,\}$ and $n,m\in\N,$ we have
\[\forall x\in X\, \Big[\bigwedge_{\phi\in A}\Big(\abs{\phi(x)-r_\phi}\leq\frac{1}{n}\Big)\,\Rightarrow\,x\notin \big(1+\frac{1}{m}\big) B_X\,\Big].\]
i.e. the set $\big\{ x\in X\,\vert \bigwedge_{\phi\in A}\abs{\phi(x)-r_\phi}\leq\frac{1}{n}\big\}\,$ is disjoint from $(1+\frac{1}{m}) B_X.$ Moreover, both sets are convex and the latter is open. So by the Hahn-Banach Separation Theorem (\cite{R}~3.4), for some $\theta\in X^\prime$ and $\ell\geq 0,$
\begin{align*}
\big(1+\frac{1}{m}\big) B_X\,&\subset\, \{x\in X\,\vert\, {\rm Re}(\theta(x))<\ell\}\\
\big\{ x\in X\,\vert \bigwedge_{\phi\in A}\abs{\phi(x)-r_\phi}\leq\frac{1}{n}\big\}\,&\subset\,\{x\in X\,\vert\, {\rm Re}(\theta(x))\geq\ell\}.
\end{align*}
That is, for any $x,y\in X,\,$ whenever $\norm{x}<1$ and $\bigwedge_{\phi\in A}\abs{\phi(y)-r_\phi}\leq\frac{1}{n},$ then
\begin{equation}\label{e1}
{\rm Re}(\theta (x))\,<\,\frac{\ell}{1+1/m}\,<\,\ell\,\leq\, {\rm Re}(\theta (y)).
\end{equation}

By scaling, we can assume that $\norm{\theta}=1.$

By saturation, for some $x\in\s B_X,$ $\norm{\theta}\approx \abs{\s\theta(x)}.$  Replace such $x$ by $x e^{-i{\rm Arg}(\theta (x))},$ we can assume that $\norm{\theta}\approx \s\theta(x)\in\s\R.$ Then by transferring (\ref{e1}),
\[1\,=\,\norm{\theta}\approx \s\theta(x)\, < \,\frac{\ell}{1+1/m}\,<\,\ell\,\leq\, {\rm Re}(\theta (a))\,\leq\,\abs{\theta(a)}\]
which gives $\norm{\widehat{a}} > 1,$ a contradiction. Hence the Claim is proved.

From the Claim, by saturation and the transfer, we have
\[\exists c\in\s X\, \big[\st\norm{c}\leq 1\, \land \,\forall \phi\in X^\prime\,\big( \s\phi(a)\approx \s\phi(c)\big)\big],\]
i.e. $c\approx a$ and $\st\norm{c}\leq 1.$ Therefore we have $\norm{\widehat{a}}_{\rm{v}}\leq 1$ as desired.
\end{proof}

\bigskip

\begin{corollary}\label{ca}
Let $\widehat{a}\in \widehat{X}.$ Then $\dis \norm{\widehat{a}} \approx \norm{a+\varepsilon}$ for some $\varepsilon\in\mathfrak{I}.$
\end{corollary}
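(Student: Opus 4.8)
The plan is to deduce Corollary \ref{ca} directly from Theorem \ref{norm} by showing that the infimum appearing there is attained up to an infinitesimal, using saturation. The statement of Theorem \ref{norm} gives
\[
\norm{\widehat{a}} \,=\, \inf\big\{\,\st\norm{x}\;\big\vert\; x\in{\rm Fin}(\s X),\; x\approx a\,\big\},
\]
so what remains is purely the observation that this standard infimum over an external set is realized within an infinitesimal by a single witness $a+\varepsilon$ with $\varepsilon\in\mathfrak I$.

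First I would reduce to a countable sequence of witnesses: by definition of infimum, for each $n\in\N$ choose $\varepsilon_n\in\mathfrak I$ with $\st\norm{a+\varepsilon_n}\leq \norm{\widehat a}+\tfrac1n$. Next I would set up a family of internal sets to which saturation applies. For each $n\in\N$ and each $\phi\in X^\prime$ with $\norm\phi\le 1$, and each $m\in\N$, consider the internal set
\[
\F_{n,\phi,m}\,:=\,\big\{\,\varepsilon\in\s X\;\big\vert\;\norm{a+\varepsilon}\le \norm{\widehat a}+\tfrac1n\;\land\;\abs{\s\phi(\varepsilon)}\le\tfrac1m\,\big\}.
\]
Each such set is internal (it is defined by internal conditions on the internal space $\s X$, with the standard real $\norm{\widehat a}$ as a parameter). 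The family $\{\F_{n,\phi,m}\}$ has the finite intersection property: given finitely many constraints, they involve only finitely many indices $n_1,\dots,n_k$, finitely many functionals $\phi_1,\dots,\phi_j$, and a largest $m$; taking $n=\max n_i$ and the corresponding $\varepsilon_n\in\mathfrak I$ from the first step, we have $\norm{a+\varepsilon_n}\le \st\norm{a+\varepsilon_n}+\text{(infinitesimal)}$, which is $\le \norm{\widehat a}+\tfrac1{n}+\text{infinitesimal}$; a small adjustment (e.g. working with $\norm{\widehat a}+\tfrac1{2n}$ in the first step, or noting the slack) shows $\varepsilon_n$ lies in the relevant finite intersection, since $\s\phi_i(\varepsilon_n)\approx 0$ forces $\abs{\s\phi_i(\varepsilon_n)}\le\tfrac1m$. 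Then by $\kappa$-saturation (with $\kappa$ exceeding $\abs{X^\prime}$, as already assumed) pick $\varepsilon\in\bigcap_{n,\phi,m}\F_{n,\phi,m}$. For this $\varepsilon$: from the $\phi,m$ conditions over all $m\in\N$ we get $\s\phi(\varepsilon)\approx 0$ for every $\phi\in\bar B_{X^\prime}$, hence for every $\phi\in X^\prime$ by scaling, so $\varepsilon\in\mathfrak I$; and from the $n$-conditions over all $n\in\N$ we get $\st\norm{a+\varepsilon}\le\norm{\widehat a}$, while the reverse inequality $\norm{\widehat a}\le\st\norm{a+\varepsilon}$ is the easy direction already established in Theorem \ref{norm}. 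Therefore $\st\norm{a+\varepsilon}=\norm{\widehat a}$, i.e. $\norm{a+\varepsilon}\approx\norm{\widehat a}$, which is the assertion.

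The one point requiring a little care — and the only genuine obstacle — is the finite intersection property, specifically reconciling the \emph{standard-part} bound $\st\norm{a+\varepsilon_n}\le\norm{\widehat a}+\tfrac1n$ with the \emph{literal} internal bound $\norm{a+\varepsilon_n}\le\norm{\widehat a}+\tfrac1n$ needed to place $\varepsilon_n$ in $\F_{n,\phi,m}$. This is handled by a routine slack trick: in the first step choose $\varepsilon_n$ with $\st\norm{a+\varepsilon_n}<\norm{\widehat a}+\tfrac1{2n}$, so that the literal norm, being within an infinitesimal of its standard part, satisfies $\norm{a+\varepsilon_n}<\norm{\widehat a}+\tfrac1{n}$ outright; alternatively one could define $\F_{n,\phi,m}$ with the relaxed bound $\norm{a+\varepsilon}\le\norm{\widehat a}+\tfrac1n+\tfrac1m$. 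Everything else is bookkeeping. In fact, a shorter route avoids the explicit family: apply Theorem \ref{norm} and the transfer/saturation argument of its proof directly, where the witness $c$ produced there with $\st\norm c\le\norm{\widehat a}$ and $c\approx a$ already gives $\varepsilon:=c-a\in\mathfrak I$ with $\st\norm{a+\varepsilon}=\st\norm c\le\norm{\widehat a}$, and the opposite inequality from the first paragraph of that proof's argument closes the gap — so $\norm{a+\varepsilon}\approx\norm{\widehat a}$. Either presentation is acceptable; I would favor the second as it reuses machinery already in place.
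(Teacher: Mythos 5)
Your argument is correct, but it is not the route the paper takes. The paper's own proof is the countable--saturation trick: for each $n\in\N$ it picks $\varepsilon_n\in\mathfrak{I}$ with $\big\vert\,\norm{\widehat{a}}-\norm{a+\varepsilon_n}\big\vert\leq 1/n$, extends $\{\varepsilon_n\}$ to an internal sequence, and takes $\varepsilon=\varepsilon_n$ at an infinite index, checking only that $\norm{\varepsilon_n}\leq 2\norm{a}+1/n<\infty$; this needs nothing beyond $\omega_1$-saturation, but it leaves the membership $\varepsilon\in\mathfrak{I}$ of the infinite-index term essentially unexamined (finiteness of $\norm{\varepsilon}$ is all that is noted, and belonging to the external set $\mathfrak{I}$ does not automatically persist along an internal extension). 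Your first route instead saturates over the family $\mathcal{F}_{n,\phi,m}$ indexed by $\N\times\bar{B}_{X^\prime}\times\N$, building the condition $\abs{\s\phi(\varepsilon)}\leq 1/m$ into the internal sets, so the resulting $\varepsilon$ is visibly in $\mathfrak{I}$; the price is $\kappa$-saturation with $\kappa$ exceeding $\abs{X^\prime}$, but that is no more than Theorem~\ref{norm} itself already used, and your slack adjustment reconciling the standard-part bound with the literal internal bound is exactly what is needed for the finite intersection property. Your second route is even leaner and arguably the cleanest of the three: the proof of Theorem~\ref{norm} (after scaling) already produces $c\approx a$ with $\st\norm{c}\leq\norm{\widehat{a}}$, so $\varepsilon:=c-a\in\mathfrak{I}$ attains the infimum on the nose, with the reverse inequality coming from the easy half of that proof; the only point to add is the trivial case $\norm{\widehat{a}}=0$, where $\varepsilon=-a$ works, since the scaling normalization is unavailable there. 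In short: no gap; your versions trade the paper's lighter (countable) saturation for arguments that make the requirement $\varepsilon\in\mathfrak{I}$ explicit, which if anything tightens the paper's own write-up.
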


\begin{proof}
By the above theorem, for each $n\in \N$ there is  $\varepsilon_n\in\mathfrak{I}$ so that
\[\Big\vert\norm{\widehat{a}}\,-\,\norm{a+\varepsilon_n}\Big\vert\,\leq\,\frac{1}{n}.\]
Extend $\{\varepsilon_n\}$ to an internal sequence, let $\varepsilon =\varepsilon_n$ for any $n\in\s\N\setminus\N\,$ and notice for each $n\in\s\N$ that $\dis \norm{\varepsilon_n}\leq 2\norm{a}+\frac{1}{n} <\infty.$
\end{proof}

\bigskip

\section{The spaces $c_0,\, \ell_1,\, \ell_\infty\,$ and $ba(\N)$}

In this section, we draw from results in the previous section some interesting conclusions about certain Banach spaces of complex sequences. We let $c_0,\, \ell_1,\, \ell_\infty\,$ respectively denote the Banach space of complex sequences which converge to $0,$ which are summable and which are uniformly bounded. $c_0$ and $\ell_\infty$ are both given the supremum norm and each $a\in \ell_1$ is given the norm $\sum_{n\in\N}\abs{a_n}.\,$ We let $ba(\N)\,$ to denote the Banach space of finitely additive complex measures defined for all subsets of $\N,\,$ with the total variation norm. (See \cite{D}.)

It is well-known that $c_0^\prime = \ell_1,\,$ $\ell_1^\prime =\ell_\infty\,$ and $\ell_\infty^\prime = ba(\N).$

\bigskip

\begin{application}
Let $X=c_0.$ Then
\[{\rm Fin}(\s X)\,=\,{\rm Fin}(\s c_0)\,=\,\big\{a\in\s c_0\,\vert\, \forall b\in \ell_1\,\abs{\sum_{n\in\s\N} a_n\s b_n}<\infty\, \big\}\]
and, in it, $a\approx c$ iff $\dis \forall b\in \ell_1\,\sum_{n\in\s\N} a_n\s b_n\approx \sum_{n\in\s\N} c_n\s b_n.\,$

Let $\pi : \widehat{X}\to X^{\prime\prime}$ be given by Theorem~\ref{bidual}, i.e. $\pi : \widehat{c_0}\to \ell_\infty.\,$

Let $\widehat{a}\in\widehat{c_0}\,$ and let $c\in\ell_\infty\,$ denote $\pi(\widehat{a}).\,$ Then from the definitions and transfer, we have for all $b\in\ell_1\,$ that
\[\sum_{n\in\s\N} a_n\s b_n =\s b (a)\approx \pi (\widehat{a}) (b)= c(b)=\sum_{n\in\N}c_n b_n = \sum_{n\in\s \N}\s c_n \s b_n. \]
By taking $b={\rm 1}_{\{ n\}},\,n\in \N,\,$ we have $a_n\approx c_n\,$ for all $n\in\N.$

Therefore, $\pi(\widehat{a}) = (\st a_n)_{n\in \N}.\,$ In particular, for $a\in {\rm Fin}(\s c_0),$
\[\forall b\in\ell_1\,\sum_{n\in\s N}a_n\s b_n\approx 0\quad\text{iff}\quad \forall n\in\N\; a_n\approx 0.\]

Hence, given any $a\in\s c_0\,$ with $a_n\approx 0$ for all $n\in \N,\,$ if $\dis \sum_{n\in\s \N}a_n\s b_n\not\approx 0\,$ for some $b\in\ell_1,\,$ then $\dis \sum_{n\in\s \N}a_n\s b_n\,$ is infinite for some other $b\in\ell_1.\,$

Moreover, as a consequence of Theorem~\ref{norm}, given any $a\in {\rm Fin}(\s c_0),\,$ there exists $c\in {\rm Fin}(\s c_0),\,$ such that
\[\forall b\in\ell_1\,\sum_{n\in\s \N}a_n\s b_n\approx\sum_{n\in\s \N}c_n\s b_n\quad\text{and}\quad \st\sup_{n\in\N}\abs{c_n}=\sup_{b\in\ell_1,\,\norm{b}=1}\st \sum_{n\in\s \N}a_n\s b_n. \]
\hfill $\Box$
\end{application}

\bigskip

A similar application of Theorem~\ref{bidual} and Theorem~\ref{norm} gives the following concrete representation of measures in $ba(\N).\,$

\bigskip

\begin{application}
Let $X=\ell_1.$ Then
\[{\rm Fin}(\s X)\,=\,{\rm Fin}(\s \ell_1)\,=\,\big\{a\in\s \ell_1\,\vert\, \forall b\in \ell_\infty\,\abs{\sum_{n\in\s\N} a_n\s b_n}<\infty\, \big\}\]
and here $a\approx c$ iff $\dis \forall b\in \ell_\infty\,\sum_{n\in\s\N} a_n\s b_n\approx \sum_{n\in\s\N} c_n\s b_n.\,$

Let $\pi : \widehat{X}\to X^{\prime\prime}$ be given by Theorem~\ref{bidual}, i.e. $\pi : \widehat{\ell_1}\to ba(\N).\,$

Therefore for each $\mu\in ba(\N),\,$ there is $a\in{\rm Fin}(\s \ell_1)\,$ such that $\pi(\widehat{a})=\mu.\,$

Let $S\subset \N,\,$ so ${\rm 1}_S\in\ell_\infty\,$ and
\[\mu (S) = \int_S d\mu = \mu ({\rm 1}_S)=\pi(\widehat{a})({\rm 1}_S)\approx \sum_{n\in\s\N} a_n \s{\rm 1}_S(n). \]

That is, by the above and Theorem~\ref{norm}, for any $\mu\in ba(\N),\,$ there is $a\in\s \ell_1\,$
\[\forall S\in \N\; \mu (S) =\st \sum_{n\in\s S} a_n\quad\text{and}\quad \norm{\mu} =\st \sum_{n\in\s\N}\abs{a_n}. \]
\hfill $\Box$
\end{application}

\bigskip

\section{The bidual of a C*-algebra}

In this section we will show that the bidual of a C*-algebra forms a von Neumann algebra.

Following the tradition, the involution of an element $x$ in an C*-algebra is denoted by $x^*,\,$ not to be confused with the nonstandard extension $\s x.$

Throughout this section $X$ is taken to be a standard C*-algebra. Recall that $X^{\prime\prime}$ is identified with $\widehat{X},$ hence has elements of the form $\widehat{a},\,$ where $ a\in \s X.$

Let $H$ be the Hilbert space corresponding to the universal representation of $X$ (\cite{B}~III.5.2.1). Let $\mathcal{B}(H)$ denote the C*-algebra of bounded linear operators on $H$ and identify $X$ as a C*-subalgebra of $\mathcal{B}(H)$ under this representation.

For $\xi,\rho\in H,$ we let $\omega_{\xi,\rho}$ denote the linear form in $\big(\mathcal{B}(H)\big)^\prime$ that takes $x\in \mathcal{B}(H)$ to $\langle x(\xi ),\rho\rangle.$ We also regard $\omega_{\xi,\rho}\in X^\prime\,$ when dealing only with its restriction on $X.\,$ The definition extends internally to $\omega_{\xi,\rho}$ for $\xi,\rho\in \s H.$ It is clear that $\norm{\omega_{\xi,\rho}}\leq \norm{\xi}\,\norm{\rho}.$ In particular, $\,\omega_{\xi,\rho}\in X^\prime\,$ for $\,\xi,\rho\in H.\,$

\bigskip

\begin{proposition}\label{pl}
Each element in $X^\prime\,$ is a linear combination of the $\omega_{\xi,\xi}\,$ where $\,\xi\in H.$
\end{proposition}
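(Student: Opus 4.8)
The plan is to exploit the standard fact that bounded linear functionals on $\BH$ (restricted to a C*-subalgebra like $X$) decompose nicely relative to a representation, combined with a polarization identity. First I would recall the well-known result that since $H$ carries the universal representation of $X$, every $\phi\in X^\prime$ extends to a normal functional on $\BH$ when $X$ is viewed weak-operator-densely inside its enveloping von Neumann algebra; more concretely, for the universal representation each $\phi\in X^\prime$ is of the form $x\mapsto \langle \pi(x)\xi,\rho\rangle$ for suitable $\xi,\rho\in H$ — this is precisely the content of \cite{B}~III.5.2.1 together with the fact that the universal representation is the direct sum of the GNS representations of all states, so every state (and hence, by a norm argument, every functional of norm comparable to it) is a vector state $\omega_{\xi,\xi}$ with $\xi\in H$. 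So the first step is: every $\phi\in X^\prime$ can be written as $\omega_{\xi,\rho}$ for some $\xi,\rho\in H$.

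Next I would apply the polarization identity to reduce $\omega_{\xi,\rho}$ to a linear combination of "diagonal" forms $\omega_{\eta,\eta}$. Explicitly, for $x\in\BH$,
\[
\langle x\xi,\rho\rangle = \frac{1}{4}\sum_{k=0}^{3} i^{k}\,\langle x(\xi+i^{k}\rho),\,\xi+i^{k}\rho\rangle,
\]
which is exactly the statement $\omega_{\xi,\rho} = \frac14\sum_{k=0}^3 i^k\,\omega_{\xi+i^k\rho,\,\xi+i^k\rho}$. Since each $\xi+i^k\rho\in H$, this writes an arbitrary $\omega_{\xi,\rho}$ — and hence, by the first step, an arbitrary element of $X^\prime$ — as a finite linear combination of the $\omega_{\eta,\eta}$, $\eta\in H$. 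That completes the proof.

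I expect the only real obstacle to be pinning down the first step cleanly, i.e. justifying that for the universal representation \emph{every} $\phi\in X^\prime$ (not just states) arises as $\omega_{\xi,\rho}$ with genuine $\xi,\rho\in H$. The clean route: a state $\psi$ on $X$ is $\omega_{\xi_\psi,\xi_\psi}$ with $\xi_\psi$ the canonical cyclic vector in the $\psi$-summand of $H$; a general $\phi\in X^\prime$ is a linear combination of four states (write $\phi$ in terms of its real/imaginary and positive/negative parts, using that the self-adjoint part of $X^\prime$ is spanned by states), and then collect. Alternatively one cites directly that the enveloping von Neumann algebra $\pi_u(X)''$ is in standard form on $H$ so that its predual consists of vector functionals; either way this is a known fact about the universal representation and needs only to be invoked, not reproved. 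One should take a small amount of care that the $\omega_{\xi,\rho}$ here are being regarded as elements of $X^\prime$ by restriction (as the paragraph preceding the proposition already notes), so no issue of extending functionals arises — we only ever restrict.
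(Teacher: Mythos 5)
Your fallback (``clean'') route is exactly the paper's proof: by \cite{B}~II.6.3.4 every $\phi\in X^\prime$ is a canonical linear combination of positive functionals, and by the universal representation (\cite{B}~III.5.2.1) every positive functional is a vector state $\omega_{\xi,\xi}$ with $\xi\in H$; once you say that, you are done, and the polarization identity is superfluous on that route. Your headline route is genuinely different and rests on a stronger claim, namely that every $\phi\in X^\prime$ is a \emph{single} vector functional $\omega_{\xi,\rho}$ in the universal representation, which you then polarize. The polarization computation is correct with the paper's convention $\omega_{\xi,\rho}(x)=\langle x(\xi),\rho\rangle$, but the justification you give for the first step is not: the universal representation is not the standard form of the enveloping von Neumann algebra (it is far too large --- already for $X=M_2(\C)$ the universal Hilbert space is nonseparable while the standard form is four-dimensional), and the passage from ``every state is a vector state'' to ``every bounded functional is one $\omega_{\xi,\rho}$'' is not ``a norm argument''. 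The single-vector-functional statement is in fact true for the universal representation, but its standard proof uses the polar decomposition $\phi=u\cdot\abs{\phi}$ with $u$ a partial isometry in $X^{\prime\prime}$, i.e. precisely the von Neumann algebra structure of the bidual that this paper is in the process of establishing (Sherman--Takeda); invoking it here is heavier than needed and uncomfortably close to circular in context. So keep your clean route --- Jordan decomposition into positive functionals plus vector states from the universal representation --- which is the paper's argument, and drop (or clearly flag as an optional stronger fact) the claim that a general $\phi$ is a single $\omega_{\xi,\rho}$.
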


\begin{proof} By the universal representation (\cite{B}~III.5.2.1), positive linear forms from $X^\prime$ are precisely the $\omega_{\xi,\xi}$ for some $\xi\in H.$ On the other hand, each element in $X^\prime$ is represented as a canonical linear combinations of some positive ones (\cite{B}~II.6.3.4).
\end{proof}


\bigskip

Given $\phi\in X^\prime,\,$ we regard $\phi\in \mathcal{B}(H)^\prime\,$ via the above canonical representation. Also, from Proposition~\ref{pl}, the following is immediate.

\bigskip

\begin{lemma}\label{lpe}
Let $a,b\in{\rm Fin}(\s X).\,$ Then $a\approx b\,$ iff $\s\omega_{\xi,\rho}(a)\,=\,\s\omega_{\xi,\rho}(b)\,$ for all $\xi,\rho\in H.\,$
\end{lemma}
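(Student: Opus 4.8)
The plan is to deduce this directly from Proposition~\ref{pl} together with the definition of $\approx$ on $\text{Fin}(\s X)$. Recall that for $a,b\in\text{Fin}(\s X)$, by definition $a\approx b$ iff $\s\phi(a)\approx\s\phi(b)$ for every $\phi\in X^\prime$; and since $a,b\in\text{Fin}(\s X)$, each $\s\phi(a)$ and $\s\phi(b)$ is a finite element of $\s\C$, so the condition $\s\phi(a)\approx\s\phi(b)$ is really the statement that $\st(\s\phi(a))=\st(\s\phi(b))$, which under the identification $\widehat{X}\cong X^{\prime\prime}$ is exactly $\pi(\widehat a)(\phi)=\pi(\widehat b)(\phi)$.

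First I would prove the forward direction, which is trivial: if $a\approx b$ then in particular $\s\omega_{\xi,\rho}(a)\approx\s\omega_{\xi,\rho}(b)$ for all $\xi,\rho\in H$, since each $\omega_{\xi,\rho}$ with $\xi,\rho\in H$ lies in $X^\prime$ (indeed $\norm{\omega_{\xi,\rho}}\leq\norm\xi\,\norm\rho<\infty$). Strictly speaking one should note the statement writes ``='' rather than ``$\approx$''; but for $\xi,\rho\in H$ (standard vectors) and $a,b\in\text{Fin}(\s X)$, the quantities $\s\omega_{\xi,\rho}(a)$ and $\s\omega_{\xi,\rho}(b)$ are finite, and the identification with the bidual lets one read off the standard parts, so the intended reading is equality of standard parts; I would phrase it that way for precision, or simply observe that ``$a\approx b$'' already means the seminorm values agree infinitely closely and that $\omega_{\xi,\rho}$ being a bona fide element of $X^\prime$ gives the claim. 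For the converse: suppose $\s\omega_{\xi,\rho}(a)=\s\omega_{\xi,\rho}(b)$ (as standard parts) for all $\xi,\rho\in H$. By Proposition~\ref{pl}, every $\phi\in X^\prime$ can be written as a finite linear combination $\phi=\sum_{j=1}^k \lambda_j\,\omega_{\xi_j,\xi_j}$ with $\lambda_j\in\C$ and $\xi_j\in H$. Then by linearity of $\s\phi$ and of the standard-part map on finite elements, $\st(\s\phi(a))=\sum_j\lambda_j\,\st(\s\omega_{\xi_j,\xi_j}(a))=\sum_j\lambda_j\,\st(\s\omega_{\xi_j,\xi_j}(b))=\st(\s\phi(b))$, i.e. $\s\phi(a)\approx\s\phi(b)$. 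Since $\phi\in X^\prime$ was arbitrary, $a\approx b$.

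I do not expect any genuine obstacle here; the lemma is essentially a restatement of Proposition~\ref{pl} once one unwinds the definition of $\approx$. The only point requiring a word of care is the passage from the linear-combination representation of $\phi$ to the infinitesimal-closeness conclusion: one must use that $a,b\in\text{Fin}(\s X)$ so that all the relevant values are finite and the standard-part map is additive and $\C$-homogeneous on them (which is immediate), and one must note the combination is \emph{finite} so no issue of infinite sums arises. A secondary bookkeeping point is the mild abuse whereby $\omega_{\xi,\rho}$ is regarded both as an element of $\mathcal{B}(H)^\prime$ and, by restriction, of $X^\prime$; since $X$ is identified with a C*-subalgebra of $\mathcal{B}(H)$ via the universal representation and $\phi\in X^\prime$ is extended to $\mathcal{B}(H)^\prime$ canonically, the two viewpoints agree on $\s X$, so there is nothing to check beyond noting this.
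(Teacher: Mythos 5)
Your proof is correct and follows essentially the same route as the paper: the forward direction is immediate because each $\omega_{\xi,\rho}$ with $\xi,\rho\in H$ lies in $X^\prime$, and the converse invokes Proposition~\ref{pl} to write any $\phi\in X^\prime$ as a finite linear combination of the $\omega_{\xi,\xi}$ and conclude $\s\phi(a)\approx\s\phi(b)$ for all $\phi$. Your added care about reading the stated ``$=$'' as equality of standard parts (i.e.\ $\approx$ for finite values) and about finiteness of the linear combination is a reasonable tightening of the paper's terser argument, not a different approach.
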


\begin{proof} Since $\,\omega_{\xi,\rho}\in X^\prime\,$ for $\,\xi,\rho\in H,\,$ one direction is trivial.

For the other one, assume that $\s\omega_{\xi,\rho}(a)\,=\,\s\omega_{\xi,\rho}(b)\,$ for all $\xi,\rho\in H.\,$ Then by Proposition~\ref{pl}, $\,\forall \phi\in X^\prime\, \s\phi (a)\,=\, \s\phi ( b),\,$ i.e. $\,a\,\approx\, b.$
\end{proof}

\bigskip

For $a\in{\rm Fin}(\s X)$ and $\xi,\rho\in H,\,$
\begin{equation}\label{omega}
\overline{\s\omega_{\rho,\xi}(a)}\,=\,\overline{\langle a (\s\rho),\s\xi\rangle}\,=\,\langle \s\xi,a (\s\rho)\rangle\,=\,\langle a^* (\s\xi),\s\rho\rangle\,=\,\,\s\omega_{\xi,\rho}(a^*).
\end{equation}
Therefore we have:

\bigskip

\begin{corollary}
Let $a,b\in{\rm Fin}(\s X).\,$ Then $a\approx b\,$ iff $a^*\approx b^*.$\hfill $\Box$
\end{corollary}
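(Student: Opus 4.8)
The plan is to deduce this corollary directly from Lemma~\ref{lpe} together with the identity~(\ref{omega}). The statement asserts that, for $a,b\in{\rm Fin}(\s X)$, we have $a\approx b$ if and only if $a^*\approx b^*$; by symmetry of the roles of $a,b$ and of $a^*,b^*$ (noting $(a^*)^*=a$ by transfer of the involution axiom), it suffices to prove just one implication.

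So first I would assume $a\approx b$. By Lemma~\ref{lpe} this is equivalent to $\s\omega_{\xi,\rho}(a)=\s\omega_{\xi,\rho}(b)$ for all $\xi,\rho\in H$. To show $a^*\approx b^*$, again by Lemma~\ref{lpe}, it is enough to verify $\s\omega_{\xi,\rho}(a^*)=\s\omega_{\xi,\rho}(b^*)$ for all $\xi,\rho\in H$. Fix such $\xi,\rho$. Applying~(\ref{omega}) to $a$ (with the roles of $\xi,\rho$ suitably placed) gives $\s\omega_{\xi,\rho}(a^*)=\overline{\s\omega_{\rho,\xi}(a)}$, and likewise $\s\omega_{\xi,\rho}(b^*)=\overline{\s\omega_{\rho,\xi}(b)}$. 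Since $\rho,\xi\in H$ as well, the hypothesis applied to the pair $(\rho,\xi)$ yields $\s\omega_{\rho,\xi}(a)=\s\omega_{\rho,\xi}(b)$, hence their complex conjugates agree, and therefore $\s\omega_{\xi,\rho}(a^*)=\s\omega_{\xi,\rho}(b^*)$. As $\xi,\rho$ were arbitrary, Lemma~\ref{lpe} gives $a^*\approx b^*$.

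The converse follows by replacing $(a,b)$ with $(a^*,b^*)$ in what was just proved and using $(a^*)^*=a$. One small point to keep straight is that $a,b\in{\rm Fin}(\s X)$ forces $a^*,b^*\in{\rm Fin}(\s X)$ as well, so that Lemma~\ref{lpe} is legitimately applicable to the starred elements; this is immediate since the involution is isometric, so $\norm{a^*}=\norm{a}$, and more directly because $\s\omega_{\xi,\rho}(a^*)$ is a conjugate of a finite quantity by~(\ref{omega}).

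I do not anticipate a genuine obstacle here — the corollary is essentially a bookkeeping consequence of~(\ref{omega}) and Lemma~\ref{lpe}. The only thing requiring a moment's care is the index juggling: the conjugation in~(\ref{omega}) swaps $\xi$ and $\rho$, so one must feed Lemma~\ref{lpe} the hypothesis at the \emph{swapped} pair $(\rho,\xi)$ rather than at $(\xi,\rho)$; since $H$ is closed under this swap, nothing is lost.
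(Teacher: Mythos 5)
Your argument is correct and is essentially the paper's own: the corollary is obtained there directly by combining the identity~(\ref{omega}) with Lemma~\ref{lpe}, exactly as you do, with the same swap of the pair $(\xi,\rho)$ to $(\rho,\xi)$ under conjugation. Your extra remark that $a^*,b^*\in{\rm Fin}(\s X)$ (best justified via the induced isometric involution $\phi\mapsto\phi^*$ on $X^\prime$, since ${\rm Fin}$ here is defined by the weak seminorms rather than the internal norm) is a point the paper leaves implicit, but it does not change the route.
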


\bigskip

For fixed $a\in{\rm Fin}(\s X)$ and $\rho\in H,$ by (\ref{omega}), the mapping $H\ni \xi \mapsto \overline{\s\omega_{\rho,\xi}(a)}$ is a bounded linear form on $\,H.\,$ Hence, by the Riesz-Fr\`{e}chet Theorem, there is a unique $\eta\in H$ such that $\forall \xi\in H\; \langle \xi, \eta \rangle = \st\overline{\s\omega_{\rho,\xi}(a)}.$

Moreover, for $\xi,\rho\in H,$ if $b\in{\rm Fin}(\s X)$ and $a\approx b,$ then $\overline{\s\omega_{\rho,\xi}(a)}\,\approx\,\overline{\s\omega_{\rho,\xi}(b)},\,$ since  $\omega_{\rho,\xi }(x)\in X^\prime.$ We thus define
\[\pi: X^{\prime\prime} \to \mathcal{B}(H)\]
by letting $\pi (\widehat{a})$ be the bounded operator on $H$ that takes $\rho\in H$ to this unique $\eta\in H.\,$ That is, for $\rho\in H,\,$ $\pi(\widehat{a})(\rho )\,$ is the unique element in $H$ such that
\[\forall\xi\in H\;\big[  \langle \xi, \pi(\widehat{a})(\rho )\rangle \,=\,\st \langle \s\xi, a(\s\rho)\rangle\big].\]

\bigskip

In case the preimage or the image of $\pi$ is in $X,$ we have the following.

\bigskip

\begin{proposition}\label{ppg}
Let $a\in{\rm Fin}(\s X).\,$
\begin{enumerate}
  \item [(i)] If $a\in X,$ then $\pi (a) =a.$
  \item [(ii)] If $\pi (\widehat{a})\in X\,$ then $\dis \widehat{a}\,=\,\widehat{\s (\pi (\widehat{a}))}.\,$ In particular $a\approx c\,$ for some $c\in X.$
\end{enumerate}
 \end{proposition}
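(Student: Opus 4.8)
The plan is to unwind the definition of $\pi$ via the Riesz–Fréchet characterization and the $\omega_{\xi,\rho}$ functionals, and then combine with Lemma~\ref{lpe} and Proposition~\ref{ppg}(i) to handle (ii). For part (i), suppose $a\in X$. Since $X$ is represented on $H$, the element $a$ is already a bounded operator on $H$, and for all $\xi,\rho\in H$ we have $\st\langle \s\xi, a(\s\rho)\rangle = \langle \xi, a^*(\rho)\rangle$ by transfer (noting $\langle\s\xi,a(\s\rho)\rangle\in\s\C$ is finite and $\approx \langle \xi, a^*(\rho)\rangle$ because $a\in\B(H)$ is standard). Hence the unique $\eta$ with $\langle\xi,\eta\rangle = \st\langle\s\xi,a(\s\rho)\rangle$ for all $\xi$ is exactly $\eta = a^*{}^*(\rho)$—wait, more carefully: $\langle\xi,\eta\rangle = \langle a^*(\xi),\rho\rangle = \langle \xi, (a^*)^*(\rho)\rangle = \langle\xi, a(\rho)\rangle$, so $\eta = a(\rho)$. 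Therefore $\pi(a)(\rho) = a(\rho)$ for all $\rho\in H$, i.e. $\pi(a) = a$ as operators. (I should double-check the conjugation bookkeeping against display~(\ref{omega}), since $\pi$ is defined through $\overline{\s\omega_{\rho,\xi}(a)}$; the identity $\overline{\s\omega_{\rho,\xi}(a)} = \s\omega_{\xi,\rho}(a^*) = \langle a^*(\s\xi),\s\rho\rangle$ is the relevant one, and it is consistent with $\eta = a(\rho)$.)

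For part (ii), assume $\pi(\widehat{a}) = c \in X$ for some standard $c$. The goal is to show $\widehat{a} = \widehat{\s c}$, equivalently $a \approx \s c$. By Lemma~\ref{lpe}, it suffices to show $\s\omega_{\xi,\rho}(a) = \s\omega_{\xi,\rho}(\s c)$ for all $\xi,\rho\in H$, i.e. $\st\langle a(\s\xi),\s\rho\rangle = \langle c(\xi),\rho\rangle$ for all $\xi,\rho$. From the defining property of $\pi$ applied to $\widehat{a}$, we have $\langle\xi,\pi(\widehat{a})(\rho)\rangle = \st\langle\s\xi,a(\s\rho)\rangle$ for all $\xi,\rho$; since $\pi(\widehat{a}) = c$, this reads $\langle\xi, c(\rho)\rangle = \st\langle\s\xi, a(\s\rho)\rangle$. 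Combining with part (i), which gives $\langle\xi,c(\rho)\rangle = \langle\xi, \pi(\s c)(\rho)\rangle = \st\langle\s\xi,\s c(\s\rho)\rangle$, we obtain $\st\langle\s\xi, a(\s\rho)\rangle = \st\langle\s\xi,\s c(\s\rho)\rangle$ for all $\xi,\rho\in H$. Conjugating and reading off $\omega_{\xi,\rho}$ via (\ref{omega}) then yields $a^*\approx \s c^*$ and hence $a\approx\s c$ by the Corollary following (\ref{omega}) (or directly $\s\omega_{\xi,\rho}(a)\approx\s\omega_{\xi,\rho}(\s c)$ for all $\xi,\rho$). By Lemma~\ref{lpe}, $a\approx\s c$, so $\widehat{a} = \widehat{\s c} = \widehat{\s(\pi(\widehat{a}))}$, and taking $c = \pi(\widehat{a})\in X$ witnesses the final clause.

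The step I expect to require the most care is the conjugation and adjoint bookkeeping: $\pi$ is defined through $\overline{\s\omega_{\rho,\xi}(a)}$ rather than $\s\omega_{\xi,\rho}(a)$, and one must track which of $\langle a(\s\rho),\s\xi\rangle$, $\langle\s\xi, a(\s\rho)\rangle$, $\langle a^*(\s\xi),\s\rho\rangle$ appears at each stage so that the equality fed into Lemma~\ref{lpe} is literally $\s\omega_{\xi,\rho}(a) = \s\omega_{\xi,\rho}(\s c)$ and not some conjugate or adjoint variant. Display~(\ref{omega}) supplies exactly the dictionary needed; once it is applied consistently the argument is a short chain of standard-part manipulations, with $\kappa$-saturation only implicitly present through the already-established well-definedness of $\pi$.
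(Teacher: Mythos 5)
Your proof is correct and follows essentially the same route as the paper: unwind the defining relation $\langle\xi,\pi(\widehat{a})(\rho)\rangle=\st\langle\s\xi,a(\s\rho)\rangle$, use transfer for the standard element, and feed the resulting agreement of the $\omega_{\xi,\rho}$-values into Lemma~\ref{lpe} (the paper's proof of (ii) is exactly this chain). The only blemish is the transient slip $\st\langle\s\xi,a(\s\rho)\rangle=\langle\xi,a^{*}(\rho)\rangle$ in part (i), which you immediately correct to $\langle\xi,a(\rho)\rangle$, giving $\pi(a)=a$ as in the paper.
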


\begin{proof}
(i): If $a\in X,\,$ then $\,\forall\,\xi,\rho\in H,\,$ we have
\[\langle\xi,\pi(\widehat{a})(\rho) \rangle\,\approx\,\langle\s\xi,a(\s\rho) \rangle\,=\,\langle\xi,a(\rho) \rangle. \]

(ii): Let $\,b=\pi (\widehat{a})\in X.\,$ For $\,\xi,\rho\in H,\,$ we have
\[\overline{\s\omega_{\rho,\xi}(a)}\,\approx\,\langle\xi,\pi(\widehat{a})(\rho) \rangle\,=\,\langle\xi,b(\rho) \rangle\,\approx\,\langle\s\xi, \s b(\s \rho) \rangle\,=\,\overline{\s\omega_{\rho,\xi}(\s b)}. \]
So we have the required $\,a\approx \s b$ as a consequence of Lemma~\ref{lpe}. Note $b\in X.$
\end{proof}

\bigskip

The following shows that $\pi$ is an isometry.

\bigskip

\begin{lemma}\label{le}
Let $a\in{\rm Fin}(\s X).$ Then $\norm{\widehat{a}}_{X^{\prime\prime}}\,=\,\norm{\pi(\widehat{a})}_{\mathcal{B}(H)}.$
\end{lemma}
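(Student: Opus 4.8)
The plan is to establish the two inequalities $\norm{\widehat{a}}_{X^{\prime\prime}}\leq\norm{\pi(\widehat{a})}_{\B(H)}$ and $\norm{\pi(\widehat{a})}_{\B(H)}\leq\norm{\widehat{a}}_{X^{\prime\prime}}$ separately, exploiting two different descriptions of the norm on $X^{\prime\prime}=\widehat{X}$: the intrinsic one $\norm{\widehat{a}}=\sup\{\st\abs{\s\phi(a)}\,\vert\,\phi\in X^\prime,\,\norm{\phi}\leq 1\}$, and the alternative one from Theorem~\ref{norm}, $\norm{\widehat{a}}=\inf_{\varepsilon\in\mathfrak{I}}\st\norm{a+\varepsilon}$.

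For the inequality $\norm{\widehat{a}}_{X^{\prime\prime}}\leq\norm{\pi(\widehat{a})}_{\B(H)}$, I would use the intrinsic norm. Given $\phi\in X^\prime$ with $\norm{\phi}\leq 1$, write $\phi$ as the canonical linear combination of vector states $\omega_{\xi,\xi}$ via Proposition~\ref{pl}; more conveniently, use the polarization identity so that it suffices to bound $\st\abs{\s\omega_{\xi,\rho}(a)}=\st\abs{\langle a(\s\xi),\s\rho\rangle}$ in terms of $\norm{\pi(\widehat{a})}_{\B(H)}$ for $\xi,\rho\in H$ with $\norm{\xi},\norm{\rho}\leq 1$. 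By the defining property of $\pi(\widehat{a})$, we have $\st\langle a(\s\xi),\s\rho\rangle=\langle \pi(\widehat{a})(\xi),\rho\rangle$ (after conjugating appropriately, matching the form in the text), hence $\st\abs{\s\omega_{\xi,\rho}(a)}=\abs{\langle\pi(\widehat{a})(\xi),\rho\rangle}\leq\norm{\pi(\widehat{a})}_{\B(H)}$. The only subtlety is keeping track of how an arbitrary unit-norm $\phi\in X^\prime$ decomposes into vector functionals with control of the coefficients; since $\B(H)^\prime$-norm of $\omega_{\xi,\rho}$ is $\leq\norm{\xi}\norm{\rho}$ and the universal representation identifies the state space faithfully, the sup over all such $\phi$ is exactly the sup over $\omega_{\xi,\rho}$ with $\norm{\xi}\norm{\rho}\leq 1$, which is the operator norm computation.

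For the reverse inequality $\norm{\pi(\widehat{a})}_{\B(H)}\leq\norm{\widehat{a}}_{X^{\prime\prime}}$, I would instead invoke Corollary~\ref{ca}: there is $\varepsilon\in\mathfrak{I}$ with $\norm{\widehat{a}}\approx\norm{a+\varepsilon}$. Since $\varepsilon\approx 0$ in the weak nonstandard hull, Lemma~\ref{lpe} gives $a+\varepsilon\approx a$, so $\pi(\widehat{a})=\pi(\widehat{a+\varepsilon})$, and the defining formula for $\pi$ yields, for unit vectors $\xi,\rho\in H$, that $\abs{\langle\pi(\widehat{a})(\xi),\rho\rangle}=\st\abs{\langle(a+\varepsilon)(\s\xi),\s\rho\rangle}\leq\st\norm{a+\varepsilon}\approx\norm{\widehat{a}}$, because $a+\varepsilon\in\s X\subset\s\B(H)$ has internal operator norm $\norm{a+\varepsilon}$ and $\norm{\s\xi}=\norm{\s\rho}=1$. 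Taking the supremum over unit $\xi,\rho$ gives $\norm{\pi(\widehat{a})}_{\B(H)}\leq\norm{\widehat{a}}$.

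The main obstacle, I expect, is the bookkeeping in the first inequality: making the passage from a general $\phi\in\bar B_{X^\prime}$ to vector functionals quantitatively tight, i.e. verifying that $\sup\{\st\abs{\s\phi(a)}\,\vert\,\phi\in\bar B_{X^\prime}\}$ equals $\sup\{\st\abs{\langle a(\s\xi),\s\rho\rangle}\,\vert\,\xi,\rho\in H,\ \norm{\xi},\norm{\rho}\leq 1\}$. This is really the statement that, under the universal representation, the unit ball of $X^\prime$ is the weak-* closed convex hull of the $\omega_{\xi,\rho}$ with $\norm{\xi}\norm{\rho}\leq 1$ — a standard fact about the predual structure, but one that should be cited carefully (e.g. from \cite{B}) rather than re-proved. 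Everything else is a direct unwinding of the definition of $\pi$ together with the transfer of the identity $\norm{x}_{\B(H)}=\sup_{\norm{\xi},\norm{\rho}\leq 1}\abs{\langle x\xi,\rho\rangle}$ to $\s\B(H)$.
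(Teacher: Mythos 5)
Your second inequality ($\norm{\pi(\widehat{a})}_{\mathcal{B}(H)}\leq\norm{\widehat{a}}_{X^{\prime\prime}}$) is correct, though the detour through Theorem~\ref{norm} and Corollary~\ref{ca} is unnecessary: since each $\omega_{\xi,\rho}$ with unit vectors restricts to an element of the closed unit ball of $X^\prime$, one has directly $\abs{\langle\xi,\pi(\widehat{a})(\rho)\rangle}=\st\abs{\s\omega_{\xi,\rho}(a)}\leq\norm{\widehat{a}}$, which is exactly how the paper gets this half in one line.

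The problem is in the other inequality, at precisely the step you call the ``only subtlety''. The fact you propose to cite --- that the unit ball of $X^\prime$ is the weak-$*$ closed convex hull of the $\omega_{\xi,\rho}$ with $\norm{\xi}\,\norm{\rho}\leq 1$ --- does not suffice. The functional $\widehat{a}\in X^{\prime\prime}$, i.e.\ $\phi\mapsto\st\s\phi(a)$, is norm continuous but not $\sigma(X^\prime,X)$-continuous, so a weak-$*$ approximation of $\phi$ by convex combinations of vector functionals only controls $\phi$ at standard points of $X$ and gives no control of $\s\phi(a)$ at the internal point $a$; in general, for $C\subset\bar{B}_{X^\prime}$ with weak-$*$ closed convex hull equal to the ball and $F\in X^{\prime\prime}$, one can have $\sup_{C}\abs{F}<\norm{F}$ (e.g.\ $X=C[0,1]$, $C$ the unimodular multiples of point masses, and $F$ the functional assigning to a measure the total mass of its nonatomic part). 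What does make your route work is the stronger standard fact that in the universal representation every $\phi\in X^\prime$ is \emph{exactly} of the form $\omega_{\xi,\rho}\vert_X$ with $\norm{\xi}\,\norm{\rho}=\norm{\phi}$ (extend $\phi$ to a normal functional on the weak closure of $X$ in $\mathcal{B}(H)$ via Proposition~\ref{pl} and apply polar decomposition of normal functionals); a norm-closed convex hull statement would also do, since $\widehat{a}$ is linear and norm continuous. With that citation your first inequality is immediate, and the resulting argument is genuinely different from the paper's: the paper never uses more than Proposition~\ref{pl}, and instead handles this direction by saturation, choosing a hyperfinite-dimensional subspace $K\supset H$ with $\Theta(K)\approx\norm{\pi(\widehat{a})}$ and comparing $\norm{\widehat{a}}$ with the internal operator norm of the compression $a\,\varrho_K$. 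Your version is shorter but imports heavier von Neumann algebra machinery; if you keep it, quote the exact vector-functional form rather than the weak-$*$ hull statement.
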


\begin{proof}
First note that
\begin{align}\label{le1}
\norm{\pi(\widehat{a})}_{\mathcal{B}(H)}\,&=\,\sup\big\{\langle\xi,\pi(\widehat{a})(\rho) \rangle\,\big\vert\, \xi,\rho\in H\,\land\, \norm{\xi}=\norm{\rho}=1\,\big\}\\
\nonumber &=\,\sup\big\{\st\langle\s\xi,a (\s\rho) \rangle\,\big\vert\, \xi,\rho\in H\,\land\, \norm{\xi}=\norm{\rho}=1\,\big\}\\
\nonumber &=\,\sup\big\{\st\abs{\s\omega_{\rho,\xi}(a)}\,\big\vert\, \xi,\rho\in H\,\land\, \norm{\xi}=\norm{\rho}=1\,\big\}\,\leq \,\norm{\widehat{a}}_{X^{\prime\prime}}.
\end{align}

We define for an internal subspace $K\subset \s H$ the following
\[\Theta (K)\,=\, \sup\big\{\abs{\s\omega_{\xi,\rho}(a)}\,\big\vert\, \xi,\rho\in K\,\land\, \norm{\xi}=\norm{\rho}=1\,\big\}.\]

Let $\,r\,=\,\norm{\pi(\widehat{a})}_{\mathcal{B}(H)}.$ Then by (\ref{le1}) and the transfer, the family $\dis\{\mathcal{F}_{H_0, n}\,\},\,$ where
\[\mathcal{F}_{H_0, n}\,:=\,\big\{K\,\vert\, K\; \text{an internal subspace so that}\; H_0\subset K\subset \s H\;\land\;\abs{\Theta (K)-r}\leq\frac{1}{n}\,\big\},\,\] with indices ranging over all finite dimensional subspace $H_0\subset H\,$ and $n\in\N,$ has the finite intersection property. Therefore, as a consequence of enough saturation, we can fix some (hyperfinite dimensional) internal subspace $K\subset \s H\,$ such that $\,H\subset K\,$ and $\Theta (K)\,\approx\,r.$

Let $\varrho_K$ denote the projection of $\s H$ onto this $K.$ Then for all $\xi,\rho\in H\,$ we have $\s\omega_{\xi,\rho}(a)\,=\,\s\omega_{\xi,\rho}(a\,\varrho_K ).\,$ (If $\varrho_K\in \s X,$ we have $\,a\,\approx\, a\,\varrho_K\,$ by Lemma \ref{lpe}.)

Now notice that
\begin{align*}
\norm{\widehat{a}}_{X^{\prime\prime}}\,&=\,\sup\big\{\st\abs{\s\phi (a)}\,\big\vert\, \phi\in X^\prime,\; \norm{\phi}=1  \big\}\\
                  &=\,\sup\big\{\st\abs{\s\phi (a\,\varrho_K )}\,\big\vert\, \phi\in X^\prime,\; \norm{\phi}=1  \big\}\,\lessapprox\,\norm{a\,\varrho_K}_{\s \mathcal{B}(H)},
\end{align*}
where in the second equality, the $\phi$ is regarded as an element in $\mathcal{B}(H)^\prime\,$ via the canonical linear combination of positive linear functionals.

Observing that $\norm{a\,\varrho_K}_{\s \mathcal{B}(H)}\,=\,\Theta (K),\,$ we therefore have
\begin{equation}\label{le2}
\norm{\widehat{a}}_{X^{\prime\prime}}\,\lessapprox\,\norm{a\,\varrho_K}_{\s \mathcal{B}(H)}\,=\,\Theta (K)\,\approx\, r\,=\,\norm{\pi(\widehat{a})}_{\mathcal{B}(H)},
\end{equation}

Now the conclusion follows from (\ref{le1}) and (\ref{le2}).
\end{proof}

\bigskip

\begin{lemma}\label{lpf}
The embedding $\pi:X^{\prime\prime}\to\mathcal{B}(H)\,$ is a Banach space isometric isomorphism.
\end{lemma}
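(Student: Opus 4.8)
The plan is to establish three things: that $\pi$ is an isometry, that it is linear, and that it is surjective onto $\M:=\pi(X^{\prime\prime})$, which we will identify as a von Neumann algebra in what follows (but at this stage it suffices to know $\pi$ is a bijective isometry onto its image). The isometry property is already done: it is precisely Lemma \ref{le}, which gives $\norm{\widehat a}_{X^{\prime\prime}}=\norm{\pi(\widehat a)}_{\B(H)}$ for every $a\in\mathrm{Fin}(\s X)$. Injectivity follows immediately from the isometry since $\pi$ is linear. So the two remaining points are linearity and the precise statement of what ``isomorphism'' means here — I read the lemma as asserting that $\pi$ is a linear isometric embedding of $X^{\prime\prime}$ onto a closed subspace of $\B(H)$, i.e.\ an isometric isomorphism onto its range.

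First I would verify linearity of $\pi$. Fix $a,b\in\mathrm{Fin}(\s X)$ and $\alpha\in\C$, and pick $\beta\in\s\C$ with $\beta\approx\alpha$; recall $\widehat a+\alpha\widehat b=\widehat{a+\beta b}$. For $\xi,\rho\in H$ one computes, using the definition of $\pi$,
\[
\langle\xi,\pi(\widehat{a+\beta b})(\rho)\rangle
=\st\langle\s\xi,(a+\beta b)(\s\rho)\rangle
=\st\langle\s\xi,a(\s\rho)\rangle+\st\big(\beta\langle\s\xi,b(\s\rho)\rangle\big)
=\langle\xi,\pi(\widehat a)(\rho)\rangle+\alpha\langle\xi,\pi(\widehat b)(\rho)\rangle,
\]
where in the last step we use $\beta\approx\alpha$ together with the finiteness of $\langle\s\xi,b(\s\rho)\rangle$ (which holds because $b\in\mathrm{Fin}(\s X)$ and $\omega_{\rho,\xi}\in X^\prime$). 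Since this holds for all $\xi,\rho\in H$, the vectors $\pi(\widehat{a+\beta b})(\rho)$ and $\pi(\widehat a)(\rho)+\alpha\pi(\widehat b)(\rho)$ agree, and hence $\pi(\widehat a+\alpha\widehat b)=\pi(\widehat a)+\alpha\pi(\widehat b)$. That $\pi$ is well defined (independent of the representative $a$ and of $\beta$) was already recorded in the discussion preceding Lemma \ref{le}.

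Finally, for the ``isomorphism onto its image'' claim: $\pi$ is a linear isometry by Lemma \ref{le}, hence injective with closed range, and it is a homeomorphism onto $\M=\pi(X^{\prime\prime})$ equipped with the operator norm. Since $X^{\prime\prime}$ is a Banach space (Proposition 1 via Theorem \ref{bidual}) and $\pi$ is a surjective isometry onto $\M$, $\M$ is a Banach space and $\pi$ is a Banach-space isometric isomorphism $X^{\prime\prime}\to\M$. I do not expect any serious obstacle here — all the analytic content is concentrated in Lemma \ref{le} (the norm computation using a hyperfinite-dimensional $K\supset H$ and the projection $\varrho_K$), which we are entitled to assume; the present lemma is essentially a bookkeeping consequence. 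The one point that deserves care is making sure the target is understood correctly: $\pi$ need not be onto all of $\B(H)$, so the statement must be read as an isometric isomorphism onto the subspace $\M$, and one should flag that this $\M$ is precisely the candidate for the von Neumann algebra structure on $X^{\prime\prime}$ to be exhibited subsequently.
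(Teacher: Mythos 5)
Your proposal is correct and follows essentially the same route as the paper: linearity is checked by evaluating matrix coefficients $\langle\xi,\cdot(\rho)\rangle$ for $\xi,\rho\in H$ (the paper phrases this through $\s\omega_{\xi,\rho}((a+\lambda b)^*)$, you compute $\st\langle\s\xi,(a+\beta b)(\s\rho)\rangle$ directly — the same calculation), and the isometry/injectivity is delegated to Lemma~\ref{le}. Your reading of the conclusion as an isometric isomorphism onto the range $\pi(X^{\prime\prime})\subset\mathcal{B}(H)$ is the intended one.
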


\begin{proof}
For $a,b\in{\rm Fin}(\s X)\,$ and $\lambda\in\C,\,$ if $\,\xi,\rho\in H,\,$
\begin{align*}
\langle\xi,\pi(\widehat{a}+\lambda\widehat{b})(\rho)\rangle\,&\approx\,\s\omega_{\xi,\rho}((a+\lambda b)^*)\,=\,\s\omega_{\xi,\rho}(a^*+\bar{\lambda} b^*)\\
   & \,=\,\s\omega_{\xi,\rho}(a^*)+\bar{\lambda} \s\omega_{\xi,\rho}(b^*))\\
   & \,=\,\langle\xi,\pi(\widehat{a})(\rho)\rangle+\bar{\lambda}\langle\xi,\pi(\widehat{b})(\rho)\rangle
     \,=\,\langle\xi,\,\pi(\widehat{a})(\rho)+\lambda\pi(\widehat{b})(\rho)\rangle,
\end{align*}
hence $\pi$ is a linear operator.

It now follows from Lemma~\ref{le} that $\pi$ is an isometric isomorphism.
\end{proof}

\bigskip

In the remainder of this section, we will show that $\pi$ is in fact an isometric C*-isomorphism.

Given $\, \widehat{a}, \widehat{b} \in X^{\prime\prime},\,$ the mapping that takes $\,\omega_{\xi,\rho}\,$ (restricted on $X$),  where $\,\xi,\rho\in H,\,$ to $\,\langle \xi,\, \pi(\widehat{a})\big(\pi (\widehat{b}) (\rho )\big)\rangle$ extends to a bounded linear form in $\,X^\prime\,$ via Proposition~\ref{pl}. So this bounded linear form is the unique $\widehat{c^* }$ for some $\, c \in{\rm Fin}(\s X).\,$ Then for $\,\xi,\rho\in H,\,$
\[\langle \xi,\, \pi(\widehat{a})\big(\pi (\widehat{b}) (\rho )\big)\rangle\,=\,\widehat{c^*} (\omega_{\xi,\rho})\,\approx\,\s\omega_{\xi,\rho}(c^*)\,=\, \langle \s\xi,\, c(\s\rho)\rangle\,\approx\, \langle \xi,\, \pi(\widehat{c})(\rho)\rangle,\]
where the second equality follows from (\ref{omega}).

In other words, we have $\pi(\widehat{c})\,=\,\pi(\widehat{a})\pi(\widehat{b}).\,$ Hence the image of $\pi$ is closed under the product in $\mathcal{B}(H).\,$

Also for $\,\xi,\rho\in H,\,$
\begin{align*}
\langle \xi,\, (\pi(\widehat{a}))^* (\rho)\rangle\,&=\,\langle \pi(\widehat{a})(\xi),\,\rho\rangle\,=\,\overline{\langle \rho,\,\pi(\widehat{a})(\xi)\rangle}\,\approx\, \overline{\langle \s\rho,\,a(\s\xi)\rangle}\\
     &\approx\, \langle a(\s\xi),\,\s\rho \rangle\,=\,\langle \s\rho,\, a^* (\s\xi) \rangle.
\end{align*}
i.e. $\,(\pi (\widehat{z})^*\,=\,\pi (\widehat{a^*})\,$ and the image of $\pi$ is closed under the involution in $\mathcal{B}(H).\,$

These together with Lemma~\ref{lpf} prove that:

\bigskip

\begin{theorem}\label{tcs}
$\pi$ is an isometric C*-isomorphism embedding $\,X^{\prime\prime}\,$ into $\mathcal{B}(H).\,$ \hfill $\Box$
\end{theorem}

\bigskip

Recall from \cite{B} that the weak operator topology is the weakest topology on $\mathcal{B}(H)$ that makes all $\omega_{\xi,\rho},\,\xi,\rho\in H,\,$ continuous. It is therefore generated by open sets of the form $\,\{x\in \mathcal{B}(H)\,\vert\, \langle \xi,\,x(\rho) \rangle<\epsilon\,\},\;\xi,\rho\in H,\, \epsilon>0.\,$ A von Neumann algebra is a C*-algebra which is closed in the weak operator topology in some representation as a C*-subalgebra of the algebra of bounded operators on some Hilbert space.

\bigskip

\begin{corollary}
{\rm (Sherman-Takeda Theorem)} The von Neumann algebra generated by a C*-algebra $X\,$ is isometrically C*-isomorphic to  $X^{\prime\prime}.$
\end{corollary}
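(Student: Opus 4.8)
The plan is to identify the closed unit ball of $\pi(X^{\prime\prime})\subset\mathcal{B}(H)$ with the weak-operator closure of the unit ball of $X$, and then invoke Kaplansky-type density together with Theorem~\ref{tcs}. Concretely, by Theorem~\ref{tcs} the map $\pi$ is an isometric C*-isomorphism of $X^{\prime\prime}$ onto a C*-subalgebra $\mathcal{M}:=\pi(X^{\prime\prime})$ of $\mathcal{B}(H)$; it remains to show that $\mathcal{M}$ is weak-operator closed, i.e. is a von Neumann algebra, and that it is the von Neumann algebra generated by $X$ (viewing $X\subset\mathcal{B}(H)$ via the universal representation and $X\subset X^{\prime\prime}$ via the canonical embedding, which by Proposition~\ref{ppg}(i) are compatible: $\pi(\widehat x)=x$ for $x\in X$).

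First I would show $X$ is weak-operator dense in $\mathcal{M}$. Fix $\widehat a\in X^{\prime\prime}$ with $a\in\mathrm{Fin}(\s X)$, and let $U$ be a basic weak-operator neighbourhood of $\pi(\widehat a)$, determined by finitely many $\xi_1,\rho_1,\dots,\xi_k,\rho_k\in H$ and some $\epsilon>0$. By the very definition of $\pi$, we have $\langle\xi_j,\pi(\widehat a)(\rho_j)\rangle\approx\langle\s\xi_j,a(\s\rho_j)\rangle=\s\omega_{\rho_j,\xi_j}(a)$ for each $j$, and each $\omega_{\rho_j,\xi_j}$ restricts to an element of $X^\prime$. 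So the internal statement ``$\exists x\in\s X\ \big[\norm{x}\le\norm a+1\ \land\ \bigwedge_{j\le k}\abs{\s\omega_{\rho_j,\xi_j}(x)-\s\omega_{\rho_j,\xi_j}(a)}<\epsilon/2\big]$'' holds (witnessed by $x=a$ up to the infinitesimal slack), hence by transfer it is witnessed by some standard $x\in X$; for that $x$, Proposition~\ref{ppg}(i) gives $\pi(x)=x$ and $\langle\xi_j,x(\rho_j)\rangle=\s\omega_{\rho_j,\xi_j}(x)\approx\langle\xi_j,\pi(\widehat a)(\rho_j)\rangle$, so $x\in U$. Thus $X$ meets every weak-operator neighbourhood of every point of $\mathcal{M}$, i.e. $\mathcal{M}\subset\overline{X}^{\,\mathrm{wo}}$.

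Next I would establish the reverse inclusion, namely that $\mathcal{M}$ is weak-operator closed, and this is where the main effort lies. The cleanest route is: $\mathcal{M}$ is the range of an isometric isomorphism from a dual Banach space ($X^{\prime\prime}=(X^\prime)^\prime$), and one checks that $\pi$ is moreover a homeomorphism from the weak* topology $\sigma(X^{\prime\prime},X^\prime)$ onto the weak-operator topology on $\mathcal{M}$ — indeed for a net, $\widehat{a_\lambda}\to\widehat a$ weak* iff $\s\phi(a_\lambda)\to\s\phi(a)$-style convergence holds for all $\phi\in X^\prime$, and by Proposition~\ref{pl} this is equivalent to convergence against all $\omega_{\xi,\rho}$, i.e. $\pi(\widehat{a_\lambda})\to\pi(\widehat a)$ in the weak-operator topology. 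Since the closed unit ball of $X^{\prime\prime}$ is weak* compact (Banach--Alaoglu), its image is weak-operator compact, hence weak-operator closed, in $\mathcal{B}(H)$; a bounded weak-operator convergent net in $\mathcal{M}$ therefore has its limit in $\mathcal{M}$. Finally, any weak-operator limit point $T$ of $X$ satisfies $\norm T\le$ (some bound coming from the neighbourhoods one can realise), and more carefully: the weak-operator closure of a norm-bounded convex set equals its strong-operator closure and stays norm-bounded, so $\overline{X}^{\,\mathrm{wo}}=\bigcup_{n}\overline{(X)_n}^{\,\mathrm{wo}}$ where $(X)_n$ is the ball of radius $n$; each $\overline{(X)_n}^{\,\mathrm{wo}}\subset\mathcal{M}$ by the compactness just noted together with Goldstine's theorem (mentioned after Corollary~\ref{wc}), which says $(X)_1$ is weak* dense in $(X^{\prime\prime})_1$, so its $\pi$-image is weak-operator dense in $(\mathcal{M})_1$ and the latter is weak-operator closed. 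Hence $\overline{X}^{\,\mathrm{wo}}=\mathcal{M}$.

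Combining the two inclusions, $\mathcal{M}=\overline{X}^{\,\mathrm{wo}}$, so $\mathcal{M}$ is a C*-subalgebra of $\mathcal{B}(H)$ closed in the weak operator topology, i.e.\ a von Neumann algebra, and it is visibly the smallest such algebra containing $X$, i.e.\ the von Neumann algebra generated by $X$. Since $\pi:X^{\prime\prime}\to\mathcal{M}$ is an isometric C*-isomorphism by Theorem~\ref{tcs}, the statement follows. I expect the genuine obstacle to be the weak*-to-weak-operator homeomorphism claim together with the bookkeeping of norm bounds when passing from the unit ball to all of $\mathcal{M}$ (equivalently, verifying that a weak-operator limit of elements of $X$ really lands in $\mathcal{M}$ rather than merely in the weak-operator closure of $\mathcal{B}(H)$); everything else is either transfer/saturation of the kind already used above or a citation of Goldstine and Banach--Alaoglu.
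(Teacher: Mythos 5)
Your first half — using Proposition~\ref{ppg}(i) and Theorem~\ref{tcs} and then a transfer argument (with $a$ as the internal witness) to show that every basic weak-operator neighbourhood of $\pi(\widehat a)$ meets $X$, so that $\mathcal{M}:=\pi(X^{\prime\prime})\subset\overline{X}^{\,\mathrm{wo}}$ — is correct and is exactly the argument the paper gives. Your weak*-to-weak-operator homeomorphism claim is also correct (it is precisely Proposition~\ref{pl} that makes the span of the $\omega_{\xi,\rho}\vert_X$ all of $X^\prime$), and with Banach--Alaoglu it does give that $(\mathcal{M})_1$ is weak-operator compact, hence that $\mathcal{M}$ is closed under \emph{bounded} weak-operator limits.

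The gap is the step $\overline{X}^{\,\mathrm{wo}}=\bigcup_{n}\overline{(X)_n}^{\,\mathrm{wo}}$. The facts you cite (the wo-closure of a norm-bounded convex set is its so-closure and remains norm-bounded) only give the trivial inclusion $\supseteq$; the inclusion $\subseteq$ says that every weak-operator limit point of $X$ is already a limit point of some \emph{bounded} ball of $X$, and that is essentially the Kaplansky density theorem (for $X$ inside its wo-closure), which you neither prove nor properly invoke. Equivalently, what you actually need is that $\mathcal{M}$ itself is weak-operator closed, and ``all balls of $\mathcal{M}$ are wo-closed'' does not yield this without further input (the standard route is Krein--\v{S}mulian applied to the $\sigma$-weak topology, plus the fact that a $\sigma$-weakly closed $*$-subalgebra is wo-closed); a countable union of wo-closed balls need not be wo-closed. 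Note that within the paper's own framework the missing inclusion has a one-paragraph proof that avoids both Kaplansky and Krein--\v{S}mulian: if $T\in\overline{X}^{\,\mathrm{wo}}$, then by saturation there is $x\in\s X$ with $\langle \s\xi, x(\s\rho)\rangle\approx\langle\xi,T(\rho)\rangle$ for all $\xi,\rho\in H$; by Proposition~\ref{pl} every $\phi\in X^\prime$ is a finite combination of vector functionals, so $\s\phi(x)$ is finite, i.e. $x\in{\rm Fin}(\s X)$, and then $\pi(\widehat x)=T$, giving $\overline{X}^{\,\mathrm{wo}}\subset\mathcal{M}$ directly. (For what it is worth, the paper's printed proof only records the inclusion you proved in your first half and leaves this converse implicit, so you were right to sense that this is where the real work, or at least an extra argument, is needed.)
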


\begin{proof}
By Proposition~\ref{ppg} (i) and Theorem~\ref{tcs}, $\pi$ extends $X$ isometrically and isomorphically to a C*-subalgebra of $\mathcal{B}(H).\,$ Moreover, the image is the weak operator topological closure of $X:$

Let $\widehat{a}\in X^{\prime\prime}\,$ and consider $\pi(\widehat{a}).\,$ Suppose $\,\xi_i,\rho_i\in H,\, \epsilon_i>0,\; i=1,\dots, n\in\N,$
\[\bigwedge_{i=1}^n\,\abs{\langle \xi_i,\,\pi(\widehat{a})(\rho_i) \rangle}<\epsilon_i.\]
Then for some $\epsilon^{\prime}_i<\epsilon_i,$ we have
\[\bigwedge_{i=1}^n\,\abs{\langle \s\xi_i,\,a(\s\rho_i) \rangle}\leq\epsilon^{\prime}_i,\quad\text{consequently}\quad \exists x\in\s X\bigwedge_{i=1}^n\,\abs{\langle \s\xi_i,\,x(\s\rho_i) \rangle}\leq\epsilon^{\prime}_i.\]
By transfer, we then have
\[\exists x\in X\bigwedge_{i=1}^n\,\abs{\langle \xi_i,\,x(\rho_i) \rangle}\leq\epsilon^{\prime}_i.\]
So $\pi(\widehat{a})$ is in the weak operator topological closure of $X.\,$

Hence the image of $\pi$ is the von Neumann algebra generated by $X$ in $\mathcal{B}(H).$
\end{proof}

\bigskip

\begin{remark}\begin{enumerate}
  \item [(i)] The above shows that the weak nonstandard hull of a C*-algebra is always a von Neumann algebra.
  \item [(ii)] The predual of the von Neumann algebra generated by $X$ is simply $X^\prime.$ (See also Sakai's Theorem (\cite{B}~III.2.4.2).)
  \item [(iii)] The bicommutant of $X$ in $\mathcal{B}(H)$ is just $X^{\prime\prime}.$ \hfill $\Box$
\end{enumerate}
\end{remark}

\bigskip

By results in \cite{G}, the product on $X^{\prime\prime}$ given by $\widehat{a}\,\widehat{b} := \pi^{-1}\big(\pi(\widehat{a})\pi (\widehat{b})\big)$ is the same as the left and the right Arens product. But in the current setting there is a better representation of the Arens product deriving from the convergent nets in the weak operator topology.

\bigskip

\begin{theorem}\label{arens}
Given $a,b\in{\rm Fin}(\s X),\,$ there is $a_0\in{\rm Fin}(\s X),\,a_0\approx a,\,$ such that $\pi(\widehat{a})\pi(\widehat{b})\,=\,\pi(\widehat{a_0 b}).\,$

Similarly, there is $b_0\in{\rm Fin}(\s X),\,b_0\approx b,\,$ such that $\pi(\widehat{a})\pi(\widehat{b})\,=\,\pi(\widehat{ab_0}).\,$

Moreover, $a_0$ (resp. $b_0\,$) can be chosen from the internal extension of a net converging to $\pi(\widehat{a})$ (resp. $\pi(\widehat{b})\,$) in the weak operator topology.
\end{theorem}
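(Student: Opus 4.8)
The plan is to realize $\pi(\widehat a)$ as a weak-operator limit of a net from $X$, transfer that net to an internal sequence, and show that a suitably chosen internal member $a_0$ of that sequence does the job. First I would invoke the Sherman--Takeda argument: since $\pi(\widehat a)$ lies in the weak operator closure of $X$ in $\B(H)$, there is a net $(x_\lambda)_{\lambda\in\Lambda}$ in $X$ with $x_\lambda\to\pi(\widehat a)$ in the weak operator topology; by Kaplansky density we may also keep $\norm{x_\lambda}\le\norm{\pi(\widehat a)}=\norm{\widehat a}$. Using $\kappa$-saturation (with $\kappa$ large enough to exceed $\abs{\Lambda}$ and $\dim H$), I would extend the net internally and pull out a single internal index $\lambda_0$ such that $x:=\s x_{\lambda_0}\in\s X$ satisfies $\st\langle\s\xi,x(\s\rho)\rangle=\langle\xi,\pi(\widehat a)(\rho)\rangle$ for all $\xi,\rho\in H$ and $\norm x<\infty$. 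The saturation argument here is exactly the finite-intersection-property argument already used in Lemma~\ref{le} and in the proof of Sherman--Takeda: the sets
\[
\mathcal F_{H_0,n}=\Big\{\lambda\in\s\Lambda\,\Big\vert\,\bigwedge_{\xi,\rho\in H_0}\abs{\s\omega_{\rho,\xi}(x_\lambda)-\overline{\s\omega_{\rho,\xi}(a)}}\le\tfrac1n\ \land\ \norm{x_\lambda}\le\norm{\widehat a}+\tfrac1n\Big\},
\]
indexed by finite $H_0\subset H$ and $n\in\N$, have the finite intersection property by the weak-operator convergence of the standard net, so their intersection is nonempty. Set $a_0:=x$ for an element of that intersection.

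By Lemma~\ref{lpe}, $\s\omega_{\xi,\rho}(a_0)=\s\omega_{\xi,\rho}(a)$ for all $\xi,\rho\in H$ means precisely $a_0\approx a$, so $\widehat{a_0}=\widehat a$; in particular $\pi(\widehat{a_0})=\pi(\widehat a)$ and $a_0\in{\rm Fin}(\s X)$. It remains to check $\pi(\widehat{a_0 b})=\pi(\widehat a)\pi(\widehat b)$. For $\xi,\rho\in H$, compute directly, using (\ref{omega}) and the defining property of $\pi$:
\[
\langle\xi,\pi(\widehat{a_0 b})(\rho)\rangle=\st\langle\s\xi,(a_0 b)(\s\rho)\rangle=\st\langle\s\xi,a_0\big(b(\s\rho)\big)\rangle.
\]
The natural move is to put $\eta:=\st\!\big(b(\s\rho)\big)\in H$ (legitimate since $b(\s\rho)$ is a finite vector in $\s H$, indeed $\norm{b(\s\rho)}\le\norm{\widehat b}\,\norm\rho<\infty$, and it has a standard part because $\langle\s\xi,b(\s\rho)\rangle\approx\langle\xi,\pi(\widehat b)(\rho)\rangle$ for every $\xi\in H$, so $b(\s\rho)$ is weakly nearstandard, hence — after the unit-ball/Kaplansky reduction making $b$ ``bounded'' in the appropriate sense — norm-nearstandard), and observe $\eta=\pi(\widehat b)(\rho)$. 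Then $b(\s\rho)\approx\s\eta$ weakly, and since $a_0$ has finite norm, $\st\langle\s\xi,a_0(b(\s\rho))\rangle=\st\langle\s\xi,a_0(\s\eta)\rangle=\langle\xi,\pi(\widehat{a_0})(\eta)\rangle=\langle\xi,\pi(\widehat a)(\pi(\widehat b)(\rho))\rangle$, which is $\langle\xi,\pi(\widehat a)\pi(\widehat b)(\rho)\rangle$. Since $\xi,\rho$ are arbitrary this gives $\pi(\widehat{a_0 b})=\pi(\widehat a)\pi(\widehat b)$. The ``Moreover'' clause is automatic from the construction, since $a_0$ was literally selected from the internal extension of a weak-operator convergent net. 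The statement for $b_0$ is symmetric: run the same argument with $\pi(\widehat b)$ as a weak-operator limit of a net in $X$.

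The main obstacle — and the one place the argument is genuinely delicate rather than a routine transfer — is the interchange of $\st$ with the application of $a_0$ in $\st\langle\s\xi,a_0(b(\s\rho))\rangle=\st\langle\s\xi,a_0(\s\eta)\rangle$; i.e. replacing the internal vector $b(\s\rho)$ by the standard vector $\s\eta$ inside the pairing against $a_0$. This is valid because $a_0$ has finite operator norm so $a_0(b(\s\rho))-a_0(\s\eta)=a_0(b(\s\rho)-\s\eta)$ has infinitesimal inner product with every $\s\xi$, $\xi\in H$ — but that requires knowing $b(\s\rho)-\s\eta$ is \emph{norm}-infinitesimal, not merely weakly infinitesimal, which is exactly where one needs the near-standardness of $b(\s\rho)$ rather than just its finiteness. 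The clean way to secure this is to first reduce, via Corollary~\ref{ca} or Theorem~\ref{norm}, to the case where $b\in\s X$ is genuinely norm-finite (replacing $b$ by $b+\varepsilon$, $\varepsilon\in\mathfrak I$, which changes neither $\widehat b$ nor $\pi(\widehat b)$), and then, using saturation once more, to arrange that $b(\s\rho)$ is infinitely close in norm to its standard part; alternatively one builds the net for $b$ out of $X$ and reruns the saturation so that $b_0(\s\rho)$ itself is norm-nearstandard, and composes the two selections. I would carry out the $b_0$-version first for this reason, then deduce the $a_0$-version by the symmetric computation using $(a_0 b)^*=b^*a_0^*$ together with Lemma~\ref{lpe} and (\ref{omega}).
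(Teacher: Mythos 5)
Your proposal correctly isolates the crux — that one cannot replace the internal vector $b(\s\rho)$ by the standard vector $\s\big(\pi(\widehat b)(\rho)\big)$ inside the pairing against $a_0$ unless the difference is norm-infinitesimal — but the remedies you offer for it do not work, so the proof has a genuine gap. First, the reduction ``replace $b$ by $b+\varepsilon$, $\varepsilon\in\mathfrak I$, which changes neither $\widehat b$ nor $\pi(\widehat b)$'' is not available: the conclusion of the theorem involves $\pi(\widehat{a_0 b})$ with the \emph{given} internal $b$, and the internal product does not descend to the hull, so $\widehat{a_0(b+\varepsilon)}$ may differ from $\widehat{a_0 b}$ (this failure is exactly the content of the theorem and of the Remark following it). Second, ``using saturation once more to arrange that $b(\s\rho)$ is infinitely close in norm to its standard part'' is impossible for the given $b$: whether $b(\s\rho)$ is norm-nearstandard is an intrinsic property of $b$, and it is false in general — in the Remark's example the swap operator sends $\s e_1$ to $e_N$, which is weakly infinitesimal but of norm $1$; weak nearstandardness together with a finite norm bound does \emph{not} yield norm nearstandardness, contrary to the parenthetical claim in your computation. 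Third, your fallback (build the net for $b$ with Kaplansky density so that $b_0(\s\rho)$ is norm-nearstandard) can only address the $b_0$-statement, requires strong (not weak) operator convergence, and even then the step $\langle\s\xi,a(b_0(\s\rho))\rangle\approx\langle\s\xi,a(\s\eta)\rangle$ needs $\norm{a^*(\s\xi)}$ to be finite; but membership of $a$ in ${\rm Fin}(\s X)$ is only \emph{weak} finiteness and does not bound the internal norm of $a$ (and you cannot switch to a norm-finite representative of $a$, for the same reason as above). So neither half of the theorem is actually reached.

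The paper's proof sidesteps norm control entirely, and this is the idea your argument is missing: it never tries to take a standard part of $b(\s\rho)$. Writing $c=a^*$ and taking a net $c_i\in X$ with $c_i\to\pi(\widehat c)$ in the weak operator topology, it chooses by saturation a hyperfinite-dimensional $K\supset H$ and an index $k$ beyond all standard ones so that the internal-against-internal pairing is forced outright: $\langle c_k(\s\xi),b(\s\rho)\rangle\approx\langle c(\s\xi),\s\big(\pi(\widehat b)(\rho)\big)\rangle$ for all standard unit $\xi,\rho$. The finite intersection property is checked using only that at a \emph{standard} index $c_i$ is a standard operator, so $c_i(\s\xi)$ is a standard vector and its pairing with $b(\s\rho)$ is governed by the definition of $\pi(\widehat b)$ alone; no Kaplansky bound, no norm-nearstandardness, and no internal norm bound on $a$ or $b$ are needed. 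Setting $a_0=c_k^{\,*}$ then gives $a_0\approx a$ and $\pi(\widehat a)\pi(\widehat b)=\pi(\widehat{a_0b})$ directly, with the $b_0$-statement obtained by applying this to $b^*a^*$. If you want to salvage your line of attack, you would have to build the needed estimate into the saturation family in this way rather than attempt to manufacture norm-nearstandardness after the fact.
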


\begin{proof}
Let $a,b\in{\rm Fin}(\s X),\,$ write $c=a^*.\,$ Let $c_i\in X,\,i\in I,$ a net, and $c_i\to \pi(\widehat{c})\,$ in the weak operator topology. Then for any finite list of $\xi,\,\rho\in H,\,n\in \N,\,$ we have
\[\abs{\langle c_i(\s\xi),\,\s \big(\pi (\widehat{b})(\rho )\big)\rangle\,-\,\langle c(\s\xi),\,\s \big(\pi (\widehat{b})(\rho )\big)\rangle}\leq\,n^{-1}\quad\text{for all large}\; i\in I.\]
Note for $i\in I\,$ that
\[\langle c_i(\s\xi),\,\s \big(\pi (\widehat{b})(\rho )\big)\rangle\,=\,\langle c_i(\xi),\,\big(\pi (\widehat{b})(\rho )\big)\rangle\,\approx\,
\langle \s (c_i(\xi)),\,b(\s\rho )\big)\rangle\]
Therefore the family $\dis\{\mathcal{F}_{H_0, n, i}\,\}\,$ has the finite intersection property, where the indices range over all finite dimensional subspace $H_0\subset H,\,$ $n\in \N\,$ and $i\in I,\,$ and $\mathcal{F}_{H_0, n, i}$ is the internal set of the $(K,k),$ where $K$ is an internal subspace of $\s H$ and $K$ includes $H_0$ as a subspace, $i\leq k\in \s I,\,$ for all $\, \xi,\rho\in K\,$ with $\norm{\xi}=\norm{\rho}=1,$ the following is satisfied:
\[\abs{\langle c_k(\s\xi),\,b(\s \rho )\big)\rangle\,-\,\langle c(\s\xi),\,\s \big(\pi (\widehat{b})(\rho )\big)\rangle}\leq\,n^{-1}.\]
By enough saturation, we let $(K,k)\,$ be an element in the intersection of the family $\dis\{\mathcal{F}_{H_0, n, i}\,\}.\,$ Then for $\xi,\,\rho\in H\,$ (hence $\s\xi,\,\s\rho\in K\,$), we have
\[\langle c_k(\s\xi),\,b(\s \rho )\big)\rangle\,\approx\, \langle c(\s\xi),\,\s \big(\pi (\widehat{b})(\rho )\big)\rangle\,=\,
\langle \s\xi,\,a\big(\s \big(\pi (\widehat{b})(\rho )\big)\big)\rangle\,\approx\, \langle \xi,\,\pi(a)\big(\pi(\widehat{b})(\rho)\big)\rangle. \]

On the other hand,
\[\langle c_k(\s\xi),\,b(\s \rho )\big)\rangle\,=\,\langle \s\xi,\, c_k^* \big(b(\s \rho )\big)\rangle\,=\, \langle \xi,\, \pi (c_k b)(\s \rho )\rangle.\]
Since $c_k\approx c\,= a^*,\,$ if we let $a_0=c_k^*,\,$ then $a_0\approx a\,$ and the above shows that $\pi(\widehat{a})\pi(\widehat{b})\,=\,\pi(\widehat{a_0 b}).\,$ Note that $a_0\in{\rm Fin}(\s X).$

The second statement of the theorem follows a dual argument, i.e apply the above to $b^* a^*.$
\end{proof}

\bigskip

\begin{remark}
In general, for $a,b\in{\rm Fin}(\s X)\,$  the product $\widehat{a}\widehat{b}$ in $X^{\prime\prime}$ is not the same as $\widehat{ab}.\,$ For example, take $H=\ell^2(\N)$ and $X=\mathcal{B}(H).$  Fix $N\in\s\N\setminus\N.\,$ Let $a\in\s X$ be the operation that interchanges the first and the $N$th coordinates in each $\xi\in H.\,$ Then $a^2=1\,$ so $\widehat{a^2}=1\,$ but $\widehat{a}\,$ is the operator that replaces the first coordinate in each $\xi\in H$ by $0.$\hfill $\Box$
\end{remark}

\vfill

\bibliographystyle{amsplain}

\vfill

\end{document}